\documentclass[11pt]{amsart}
\usepackage{amsfonts,amsmath,amsthm,amssymb,hyperref,fancyhdr,float,caption,mathtools}
\usepackage{wasysym}
\usepackage{hyperref,fancyhdr,tikz,mathtools}
\usepackage{enumerate}
\usepackage{latexsym}
\usepackage{amsfonts}
\usepackage[utf8]{inputenc}
\RequirePackage{thmtools}
\RequirePackage[all, pdf]{xy}
\RequirePackage{enumitem}

\DeclareMathOperator{\Regular}{Reg}

\DeclareMathOperator{\Rls}{Rls}
\DeclareMathOperator{\undsp}{sp}

\DeclareMathOperator{\Object}{Obj}
\DeclareMathOperator{\Morphism}{Mor}

\DeclareMathOperator{\tPre}{Pre}

\DeclareMathOperator{\tSuc}{Suc}
\DeclareMathOperator{\tsuc}{Ch}
\DeclareMathOperator{\tssuc}{Gch}
\DeclareMathOperator{\tRoot}{Rt}

\DeclareMathOperator{\tLeaf}{Lf}
\DeclareMathOperator{\tdepth}{dpt}
\DeclareMathOperator{\theight}{hgt}
\DeclareMathOperator{\tlevel}{Lv}

\newcommand*{\fibder}{\mathop{}\!\lb{d}}
\newcommand*{\rint}{\textsc{r}\!\!\!\!\!\int}
\newcommand*{\pcspt}[1]{\Pa{#1}_\cspt}
\newcommand*{\FVform}{\Omega_{F, \mathrm{vol}}}
\newcommand*{\Conn}{\operatorname{Conn}}

\newcommand*{\cont}{\mathrm{C}}
\newcommand*{\smth}{\mathrm{C}^\infty}
\newcommand*{\lBundle}{\textup{FilBund}}
\newcommand*{\ConnlBundle}{\textup{CFilBund}}
\newcommand*{\lb}[1]{\mathbf{#1}}

\newcommand*{\Tse}{\Theta}
\newcommand*{\Shta}{\Xi}
\newcommand*{\bfomega}{\boldsymbol{\omega}}
\newcommand*{\bfiota}{\boldsymbol{\iota}}
\newcommand*{\bfmu}{\boldsymbol{\mu}}

\makeatletter
\newif\if@tikz
\@tikzfalse
\if@tikz
\usepackage{tikzheader}
\usetikzlibrary{external}
\tikzexternalize[prefix = figext/]
\providecommand{\tikzsetnextfilename}[1]{}
\newcommand{\inputfigure}[3]{%
\tikzsetnextfilename{#1}
\begin{figure}[htb]
  \centering
  \input{fig/#1.tikz}
  \caption{#3} \label{fig:#2}
\end{figure}%
}
\else
\usepackage{graphicx}
\newcommand{\inputfigure}[3]{%
\begin{figure}[htb]
  \centering
  \includegraphics{figext/#1}
  \caption{#3} \label{fig:#2}
\end{figure}%
}
\fi

\makeatother

\declaretheorem[numberwithin=section]{theorem}
\declaretheorem[sibling=theorem]{lemma}
\declaretheorem[sibling=theorem]{corollary}
\declaretheorem[sibling=theorem]{proposition}

\declaretheorem[style=definition, numberwithin=section]{definition}

\numberwithin{equation}{section}

\RequirePackage[capitalise]{cleveref}
\crefname{theorem}{Theorem}{Theorems}
\crefname{maintheorem}{Main Theorem}{Main Theorems}
\crefname{lemma}{Lemma}{Lemmas}
\crefname{corollary}{Corollary}{Corollaries}
\crefname{definition}{Definition}{Definitions}
\crefname{example}{Example}{Examples}
\crefname{remark}{Remark}{Remarks}
\crefname{question}{Question}{Questions}
\crefname{enumi}{}{}
\crefname{equation}{}{}
\crefname{figure}{Figure}{Figures}
\crefname{table}{Table}{Tables}

\makeatletter
\newif\if@refcheck
\@refcheckfalse
\if@refcheck
\usepackage{refcheck}
\newcommand{\refcheckize}[1]{%
  \expandafter\let\csname @@\string#1\endcsname#1%
  \expandafter\DeclareRobustCommand\csname relax\string#1\endcsname[1]{%
    \csname @@\string#1\endcsname{##1}\wrtusdrf{##1}}%
  \expandafter\let\expandafter#1\csname relax\string#1\endcsname
}
\refcheckize{\cref}
\refcheckize{\Cref}
\fi
\makeatother

\let\originalleft\left
\let\originalright\right
\renewcommand*{\left}{\mathopen{}\mathclose\bgroup\originalleft}
\renewcommand*{\right}{\aftergroup\egroup\originalright}
\renewcommand*{\geq}{\geqslant}
\renewcommand*{\leq}{\leqslant}

\let\emptyset\varnothing

\DeclareMathOperator{\support}{supp}
\DeclareMathOperator{\identity}{id}
\DeclareMathOperator{\proj}{pr}
\DeclareMathOperator{\Diff}{Diff}

\newcommand*{\abs}[1]{\left\lvert#1\right\rvert}

\newcommand*{\Set}[1]{\mathchoice{\left\{#1\right\}}{\{#1\}}{\{#1\}}{\{#1\}}}
\newcommand*{\Setby}[2]{\mathchoice{\left\{#1\mathrel{}\middle|\mathrel{}#2\right\}}{\{#1\mid#2\}}{\{#1\mid#2\}}{\{#1\mid#2\}}}
\newcommand*{\Pa}[1]{\mathchoice{\left(#1\right)}{(#1)}{(#1)}{(#1)}}

\newcommand*{\Res}[1]{\mathchoice{\left.\kern-\nulldelimiterspace#1\right|}{#1|}{#1|}{#1|}}

\newcommand*{\der}{\mathop{}\!\mathrm{d}}
\newcommand*{\cspt}{\mathrm{c}}

\newcommand*{\defeq}{\stackrel{\textnormal{def}}{=}}

\newcommand{\R}{\mathbb{R}}

\newcommand{\N}{\mathbb{N}}

\title[Moser stability for volume forms on noncompact fiber bundles]{Moser stability for volume forms\\ on noncompact fiber bundles}
\author{\'Alvaro Pelayo \qquad \qquad Xiudi Tang}

\begin{document}

\begin{abstract}
  We prove a stability result for volume forms on fiber bundles with compact base and noncompact fibers.  
  This generalizes the classical results of Moser and Greene--Shiohama, and recent work by the authors.
\end{abstract}

\maketitle

\section{Introduction} \label{sec:intro}

Moser proved that two volume forms on a compact manifold with equal total integral are diffeomorphic~\cite{MR0182927}. 
This was extended to noncompact manifolds by Greene and Shiohama~\cite{MR542888}, and in~\cite{PeTa18} to smooth families. 
The main result in~\cite{PeTa18} says that if $B$ is a compact manifold, and $M$ is an oriented manifold on which one has two \emph{smooth} families of volume forms $\Set{\omega_p}_{p \in B}$ and $\Set{\tau_p}_{p \in B}$ such that for each $p \in B$, $$\int_M \omega_p = \int_M \tau_p,$$ then there is a \emph{smooth} family of diffeomorphisms $\Set{\varphi_p \colon M \to M}_{p \in B}$ with $\varphi^*_p \omega_p = \tau_p$ for each $p$ provided the following holds: for any connected component $C$ of an end of $M$, either $C$ has infinite volume with respect to both $\Set{\omega_p}_{p \in B}$ and $\Set{\tau_p}_{p \in B}$, or $p \mapsto \int_C \omega_p$ and $p \mapsto \int_C \tau_p$ are continuous and their difference is smooth.  
Our goal is to prove a natural generalization of this result (\cref{thm:main-theorem}) for fiber bundles $\pi \colon M \to B$ with noncompact fiber $F$ for which $M$ is exhausted by some function $f \colon M \to \R$ compatible with the fiber bundle; we will call these \emph{exhausted bundles}. 

While the paper generalizes~\cite{PeTa18},  it is self-contained. 
Next we are going to introduce the key notions of the paper. 
Our main theorem is stated in terms of these notions at the end of the section. 

\subsection{Exhausted and filled (sub)bundles} \label{ssec:fil-exh-bundle}

Throughout this paper manifolds are ($\mathrm{C}^\infty$) smooth without boundary unless otherwise stated. 
Let $F, M, B$ be smooth manifolds, where $M, B$ may have boundaries and $B$ is connected.
Let $\pi \colon M \to B$ be a smooth surjective map. 
Suppose that for every $p \in B$ there exists an open neighborhood $U$ of $p$ in $B$ and a diffeomorphism $\phi \colon \pi^{-1}(U) \to U \times F$ such that $\pi = \phi \circ \proj_1$, where $\proj_1$ is the projection onto the first factor of the product. 
As usual, $(\pi, M, B, F)$ is called a \emph{fiber bundle}, with \emph{underlying space} $M$, \emph{base} $B$, and \emph{fiber} $F$.  
We call $U$ a \emph{trivializing region of $\pi$} and $(U, \phi)$ a \emph{trivialized chart}. 
 
\begin{definition} \label{def:filled-bundle}
  Let $(\pi, M, B, F)$ be a fiber bundle.
  Let $\Set{(U_i, \phi_i)}_{i \in \mathcal{I}}$ be a local trivialization, $f \colon M \to \mathbb{R}$ a smooth function, and $\{h_i \colon F \to \mathbb{R}\}_{i \in \mathcal{I}}$ a family of smooth functions such that $f \circ \phi_i^{-1} = h_i \circ \proj_2$, where $\proj_2$ is the projection onto the second factor of the product.
  We call $\lb{M} \defeq (\pi, M, B, F, f)$ is a \emph{filled bundle}. If $f$ and all the $h_i, i \in \mathcal{I}$, are exhaustion functions, we call $\lb{M}$ an \emph{exhausted bundle}. 
\end{definition}

We call $\Set{(U_i, \phi_i)}_{i \in \mathcal{I}}$ \emph{compatible} with $f$.
We write $\undsp(\lb{M}) \defeq M$.
 
\begin{definition}
  Let $\lb{M} = (\pi, M, B, F, f)$ be a filled bundle, $A$ a submanifold of $M$ with or without boundary, and $\Set{(U_i, \phi_i)}_{i \in \mathcal{I}}$ a local trivialization compatible with $f$.
  We call $A$ a \emph{filled subspace of} $\lb{M}$ with respect to the trivialization if for any $i \in \mathcal{I}$ there is $P_i \subset F$ with $\phi_i(A \cap \pi^{-1}(U_i)) = U_i \times P_i$.
  If $U_i \cap U_j \neq \emptyset$ then there is a diffeomorphism of $F$  induced by a change of charts $\phi_j \circ \phi_i^{-1}$ sending $P_i$ to $P_j$.
  Since $B$ is connected, the $P_i$, $i \in \mathcal{I}$, are diffeomorphic; let $P$ be one of them.
  Then $\lb{A} \defeq (\Res{\pi}_A, A, B, P, \Res{f}_A)$ is a filled bundle, which we call a \emph{filled subbundle} of $\lb{M}$.
  We write $\Res{\lb{M}}_A \defeq \lb{A}$.
\end{definition}

Here are some examples of filled and exhausted bundles:
\begin{enumerate}
  \item
    Let $(\pi, N, B, E)$ be a compact fiber bundle and let $F$ be a noncompact manifold with a smooth function $h \colon F \to \mathbb{R}$.
    Then $\lb{M} = (\pi \circ \proj_1, N \times F, B, E \times F, h \circ \proj_2)$ is a filled bundle which is exhausted if and only if $h$ is an exhaustion function.
  \item
    Let $F = \Setby{(x,y,z) \in \mathbb{R}^3}{x^2 + y^2 = 1, y^2 + z^2 > \frac{1}{4}}$, then $F$ is a noncompact $2$-manifold with $4$ ends.
    Let $\phi \in \Diff(F)$ be the diffeomorphism given by $\phi(x, y, z) = (x, -y, -z)$, switching two ends $z \to +\infty$ and $z \to -\infty$.
    Let $h \colon F \to \mathbb{R}, h(x, y, z) = z^2 + \Pa{(y^2+z^2) - \frac14}^{-1}$, then $h$ is an exhaustion function with the property $h \circ \phi = h$.
    Let $B = \mathbb{S}^1 = (0, 2) / (p \mapsto p+1)$.
    Then define $M = (0, 2) \times_\varphi F$ where $\varphi \colon (1, 2) \times F \to (0, 1) \times F$ is given by $\varphi(p, y) = (p-1, \phi(y))$.
    Let $\pi \colon M \to B$ be the map induced by $\proj_1 \colon (0, 2) \times F \to (0, 2)$, and $f \colon M \to \mathbb{R}$ be the map induced by $h \circ \proj_2 \colon (0, 2) \times F \to \mathbb{R}$.
    Then $\lb{M} = (\pi, M, B, F, f)$ is an oriented exhausted bundle where $M$ has $3$ ends, since $\phi$ is orientation preserving.
  \item
    Let $\mathcal{G}$ be a subgroup of $\mathrm{SO}(n)$.
    Let $E \subset \mathbb{R}^k$ be a noncompact complete submanifold, which is invariant under $\mathcal{G}$.
    Let $u \colon \mathbb{R}^k \to \mathbb{R}$ be a smooth function such that $u \circ \phi = u$ for any $\phi \in \mathcal{G}$.
    Let $F = E \cap \Set{u > 0}$.
    Let $h \colon F \to \mathbb{R}, h(x) = \abs{x}^2 + u(x)^{-1}$, then $h$ is an exhaustion function with the property $h \circ \phi = h$ for any $\phi \in \mathcal{G}$.
    Let $(\pi, M, B, F)$ be any fiber bundle with structure group $\mathcal{G}$ such that $B$ is compact.
    Let $f$ be the unique exhaustion for $M$ such that the transition maps in $\mathcal{G}$ is compatible with $f$, with same $h = h_i$.
    Then $\lb{M} = (\pi, M, B, F, f)$ is an oriented exhausted bundle with noncompact fiber.
\end{enumerate}

\subsection{Releasing a filled bundle} \label{ssec:release}

A diffeomorphism will be constructed in our main theorem to move volumes within each fiber of a filled bundle $\lb{M}$.
To construct it we chop the bundle into subbundles $\lb{A}$, with disconnected fibers. 
Because of this we cannot transfer volumes between connected components. To resolve the issue we introduce a new filled bundle: $\Rls \lb{A}$.

For any topological spaces $X, Y$ let $\Conn X$ be the set of connected components of $X$ and if $\mu \colon X \to Y$ is continuous, let $\Conn \mu \colon \Conn X \to \Conn Y$ be the map sending $C$ to the connected component of $Y$ containing $\mu(C)$. 

\begin{proposition} \label{prop:release}
  Let $\lb{M} = (\pi, M, B, F, f)$ be a filled bundle with $M$ connected. 
  Let $B_M \defeq \coprod_{p \in B} \Conn \pi^{-1}(p)$. 
  Let $(U, \phi)$ be a trivialized chart. 
  Let $\lambda_U \colon \coprod_{p \in U} \Conn \pi^{-1}(p) \to U \times \Conn{\pi^{-1}(U)}$ send a connected component $C$ of $\pi^{-1}(p)$ to $p$ paired with the connected component of $\pi^{-1}(U)$ containing $C$. 
  Endow $B_M$ with the smooth structure for which $\lambda_U$ is a diffeomorphism.
  Then $c_M \colon B_M \to B$ given by $c_M \Pa{\Conn \pi^{-1}(p)} = \Set{p}$, $p \in B$, is a covering. 
  Moreover, $\Rls \pi \colon M \to B_M$ given by $\Res{(\Rls \pi)}_{\pi^{-1}(p)} \defeq \Conn \colon \pi^{-1}(p) \to \Conn \pi^{-1}(p)$, $p \in B$, is a fiber bundle with connected fiber, say $F_M$, and 
  \begin{equation} \label{eq:release-object}
    \Rls \lb{M} \defeq (\Rls \pi, M, B_M, F_M, f)
  \end{equation}
  is a filled bundle.
\end{proposition}

\begin{proof}
  We have a commutative diagram
  \begin{equation*}
    \xymatrixcolsep{4pc}
    \xymatrix{
      \pi^{-1}(U) \ar[r]^-{\pi \times \Conn} \ar[dr]^{\Rls \pi} \ar[d]^{\pi} & U \times \Conn{\pi^{-1}(U)} \\
      U & \coprod\limits_{p \in U} \Conn \pi^{-1}(p) \ar[u]_{\lambda_U} \ar[l]^-{c_M}
    },
  \end{equation*}
  so $c_M$ is a covering, $\Rls \pi$ is smooth and locally trivial, and $(\Rls \pi)^{-1}(p)$ is connected for each $p \in B_M$.
  Since $M$ is connected, so is $B_M = (\Rls \pi)(M)$.
  Hence all fibers of $\Rls \pi$ are diffeomorphic.
\end{proof}  

We call $\Rls \lb{M}$ in \cref{prop:release} the \emph{releasing} of $\lb{M}$.

\subsection{Fiber forms}

Let $\lb{M} = (\pi, M, B, F, f)$ be a filled bundle with oriented fiber.
Let $\iota_p \colon \pi^{-1}(p) \hookrightarrow M$ be the inclusion, $p \in B$.
A \emph{fiber $k$-form} on $\lb{M}$ is a family $\Set{\omega_p}_{p \in B}$ such that $\omega_p$ is a $k$-form on $\pi^{-1}(p)$ and there exists $\omega \in \Omega^k(M)$ with $\omega_p = \iota_p^* \omega$.
We denote $\Set{\omega_p}_{p \in B} \defeq \omega$. A \emph{fiber top-form} is a fiber $(\dim F)$-form. 
A \emph{fiber volume form} is a fiber top-form $\omega$  such that $\omega_p$ is a volume form on $\pi^{-1}(p)$.
Let $\Omega^k_F(\lb{M})$ be the space of fiber $k$-forms on $\lb{M}$. 
The space of compactly supported fiber $k$-forms $\pcspt{\Omega^k_F}(\lb{M})$ on $\lb{M}$ is defined analogously.  
Let $\FVform(\lb{M})$ be the space of fiber volume forms on $\lb{M}$. 

\subsection{Statement of Main Theorem} \label{ssec:main-theorem}
  
Let $\lb{M} = (\pi, M, B, F, f)$ be a filled bundle with oriented fibers.
If $\omega$ is a fiber top-form on $\lb{M}$ and for all $p \in B$ the integral $\int_{\pi^{-1}(p)} \omega_p$ exists (it can be $\pm \infty$), we call the map defined by $(\int_{\lb{M}} \omega)(p)= \int_{\pi^{-1}(p)} \omega_p$ the \emph{fiber integral of $\omega$ on $\lb{M}$}. 

\begin{definition} \label{def:commensurable}
  Two forms $\omega, \tau \in \FVform(\lb{M})$ are \emph{commensurable on a filled subbundle $\lb{A}$ of $\lb{M}$} if their \emph{released fiber integrals} on $\lb{A}$:
  \begin{align*}
    \rint_{\lb{A}} \omega \defeq \int_{\Rls{\lb{A}}} \omega \quad \text{and} \quad \rint_{\lb{A}} \tau \defeq \int_{\Rls{\lb{A}}} \tau \colon B_A &\longrightarrow [-\infty, +\infty] 
  \end{align*}
  exist and are continuous with smooth difference, or are both infinite.
  We say that $\omega, \tau$ are \emph{commensurable} if they are commensurable on the restriction of $\lb{M}$ to every unbounded connected component\footnote{Such restriction is always a filled subbundle, proved in \cref{lem:subbundle-slicing}} of $f^{-1}(\alpha, +\infty)$, $\alpha \in \Regular(f)$.
\end{definition}

Below a diffeomorphism $\varphi$ of $M$ is a \emph{fiber diffeomorphism} if $\pi \circ \varphi = \varphi \circ \pi$.
If $\omega$ is a fiber $k$ form we define $\varphi^* \omega = \Set{\Res{\varphi}_{\pi^{-1}(p)}^* \omega_p}_{p \in B}$. 

\begin{theorem} \label{thm:main-theorem}
  Let $\lb{M}= (\pi, M, B, F, f)$ be a connected exhausted bundle with compact base $B$ and oriented noncompact connected fiber $F$. 
  Then for any commensurable fiber volume forms $\omega, \tau$ on $\lb{M}$ with equal fiber integral, there exists a fiber diffeomorphism $\varphi \colon M \to M$ such that $\varphi^* \omega = \tau$.
\end{theorem}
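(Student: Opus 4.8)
The plan is to run a parametrized version of the Greene--Shiohama chop-and-paste argument: cut $M$ fiberwise into compact slices by level sets of the filler $f$, solve a family Moser problem on each slice with smooth dependence on the base point $p \in B$, and glue the resulting fiber diffeomorphisms along the tree of slices, using smooth commensurability to keep the volumes matched all the way out to every end. Concretely, since $\Regular(f) = \Regular(h)$ and $f$ is an exhauster, I would first choose an increasing sequence $\alpha_0 < \alpha_1 < \cdots \to +\infty$ of regular values of $f$. By Lemma~\ref{lem:subbundle-slicing} each band $f^{-1}([\alpha_{k-1}, \alpha_k])$ is a compact filled subspace, and each $f^{-1}((\alpha_k, +\infty))$ is a union of superlevel subbundles in the sense of Definition~\ref{superlevel-subbundle}. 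The connected components of the bands, joined across the cutting hypersurfaces $f^{-1}(\alpha_k)$, assemble into the rooted tree encoding the noncompact fiber (Figure~\ref{fig:manifoldtree}), whose infinite branches are the ends; this is the combinatorial skeleton along which the induction runs, and the reduction of the theorem to a slice-by-slice problem with prescribed boundary data is the content of the filtration theorem of Section~\ref{sec:filtration-theorem}.

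The geometric-analytic core is a family Moser trick on one compact slice. On a compact slice-component, regarded as a compact filled subbundle $\lb{A}$ over $B$, I am given $\omega, \tau \in \FVform(\lb{A})$ whose fiber integrals $\int_{\lb{A}} \omega$ and $\int_{\lb{A}} \tau$ agree as smooth functions of $p$. Since $\omega_p$ and $\tau_p$ induce the same orientation, the linear path $\omega_t = (1-t)\tau + t\omega$ is a fiber volume form for every $t \in [0,1]$. Seeking a flow $\varphi_t$ with $\varphi_t^* \omega_t = \tau$, I differentiate and use Cartan's formula together with the closedness of top-forms to reduce to solving, fiberwise and smoothly in $(t, p)$, the equation $\fibder \beta_t = \tau - \omega$, after which the Moser vector field $X_t$ is determined from $\beta_t$ by nondegeneracy of $\omega_t$. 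Pointwise exactness of $\tau_p - \omega_p$ is forced by the equality of the fiber integrals; the genuinely new point is that a primitive $\beta_t$ can be chosen \emph{smoothly in $p$}, and on a noncompact fiber this is exactly where I would invoke the Hodge theory and cohomology of Bueler--Brokhorenkov~\cite{MR1893604} to produce such a smooth family. Integrating $X_t$ to time one, arranged to fix the cutting hypersurfaces, yields the fiber diffeomorphism of $\lb{A}$ intertwining $\tau$ and $\omega$.

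The remaining work is to feed the slices to this core inductively while keeping the volume accounting consistent. Proceeding outward from the compact root of the tree, before applying the core to a given slice-component I must arrange that $\omega$ and $\tau$ have equal fiber integral on it; the discrepancy is corrected by a fiber diffeomorphism supported near the cut that pushes the excess volume into the adjacent slice. Smooth commensurability (Definition~\ref{def:smoothly-commensurable}) is precisely what makes this possible coherently: on each superlevel subbundle $\lb{A}$ the released fiber integrals $\rint_{\lb{A}} \omega$ and $\rint_{\lb{A}} \tau$ are either both infinite, in which case surplus volume can be flushed out to that end, or have smooth difference, in which case the finite tail volumes match smoothly in $p$. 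Since $f$ is an exhauster, every compact subset of $M$ meets only finitely many bands, so on each compact set the composition of the slice diffeomorphisms is eventually constant; the limit is therefore a well-defined smooth fiber diffeomorphism $\varphi$ with $\varphi^* \omega = \tau$, as in the pasting philosophy of~\cite{MR542888}.

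The main obstacle is the coupling of the two individually delicate difficulties. Producing $\beta_t$ smoothly in the base parameter over a \emph{noncompact} fiber is the analytic crux, and it is what forces the integral-at-infinity hypotheses built into smooth commensurability and the use of noncompact Hodge theory. At the same time the construction must be controlled out to every end so that the volume corrections can actually be absorbed and the infinite gluing converges to a smooth map. Getting the tree combinatorics and the commensurability hypothesis to interlock with the analytic Moser step, uniformly along the induction, is where I expect the real effort to lie.
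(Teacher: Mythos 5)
Your proposal follows essentially the same route as the paper: slicing by regular levels of the exhauster, organizing the slices into the rooted tree, redistributing volume across the cuts using smooth commensurability (the content of the filtration theorem of Section~\ref{sec:filtration-theorem}), running the compactly supported family Moser trick with a primitive produced smoothly in $p$ via the weighted Hodge theory of Bueler--Brokhorenkov, and composing the resulting locally finitely supported diffeomorphisms. The one substantive point you gloss over is that the monodromy of the bundle can permute connected components of the slices, so the volume bookkeeping must be carried out on covering spaces of $B$ via released fiber integrals and the approximation lemma (Lemma~\ref{lem:approximation-lemma}); but this is internal to the filtration step you correctly identify and defer to.
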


We conclude with a few remarks:

\begin{enumerate}
  \item
    The following is an interesting problem: give conditions on a fiber bundle so that it admits an exhausted bundle structure. 
  \item
    If the fiber bundle in \cref{thm:main-theorem} is trivial we recover \cite[Theorem 1.1]{PeTa18}. 
    If $B$ is a point, this was proved by Greene and Shiohama~\cite{MR542888}.
  \item
    The proof strategy of \cref{thm:main-theorem} consists of giving the manifold a tree structure, and then constructing in terms of it a global diffeomorphism intertwining the volume forms by glueing. 
    This strategy is analogous to the one adopted in~\cite{PeTa18} but the results we prove do not follow from~\cite{PeTa18}. 
    On the other hand this is no surprise since the main theorem for smooth families can be stated with essentially no preliminaries but for fiber bundles a lot more preparation was required (\cref{ssec:fil-exh-bundle,ssec:release}) to state \cref{thm:main-theorem}.
    The reason was explained in \cref{ssec:release} where the key notion of \emph{releasing a fiber bundle} is given. 
    It is in terms of this notion that we can express the conditions on the integrals over the bundle.    
    The delicate problem has to do with the connectivity of the fibers of fiber bundles not being in general inherited by subbundles (which \emph{cannot} occur for trivial bundles as considered in~\cite{PeTa18}).
  \item
    Understanding the geometry of volume forms is important in classical mechanics, see for instance~\cite{MR2827114}.
\end{enumerate}

\section{Category of filled bundles and release functor} \label{sec:category}

Next we define the categories of filled bundles and connected filled bundles for an important reason: later we cut filled subbundles of the exhausted bundle $\lb{M}$ into subsubbundles, and we will distribute 
volumes of the fibers of subbundles into those of subsubbundles. 
As discussed in \cref{ssec:release}, the nonconnectivity of the fibers forces us to consider the releasing of subbundles and subsubbundles. 
This causes another problem, that their bases are different manifolds. 
Thanks to the functoriality of the release operation (\cref{lem:release-functor}), the bases of subsubbundles are covering spaces of those of subbundles, which makes the distribution of volumes feasible. 

Let $\lBundle$ be the category the objects of which are the filled bundles $\lb{M} = (\pi, M, B, F, f) \in \Object(\lBundle)$ and with morphisms 
from $\lb{M}$ to $\lb{M}' = (\pi', M', B', F', f')$ given by $\bfmu = (\mu, \mu_{B})$, where $\mu$ and $\mu_{B}$ are smooth maps such that
\begin{equation*}
  \xymatrixcolsep{3pc}
  \xymatrix{
    & M \ar[r]^{\pi} \ar[dl]_{f} \ar[d]^{\mu} & B \ar[d]^{\mu_{B}} \\
    \mathbb{R} & M' \ar[r]^{\pi'} \ar[l]_{f'} & B'
  }
\end{equation*}
commutes. 
Denote by $\Morphism(\lb{M}, \lb{M}')$ the space of morphisms from $\lb{M}$ to $\lb{M}'$.

\begin{lemma} \label{lem:release-functor}
  Let $\ConnlBundle$ be the subcategory of $\lBundle$ whose objects have connected underlying spaces.
  Let $(\Rls \pi, M, B_M, F_M, f)$ be as  in \cref{prop:release}. 
  Then there is a functor $\Rls \colon \ConnlBundle \to \ConnlBundle$ such that on objects $\Rls(\pi, M, B, F, f) \defeq (\Rls \pi, M, B_M, F_M, f)$.
\end{lemma}

\begin{proof}
  Let $\lb{M} = (\pi, M, B, F, f)$ and $\lb{M'} = (\pi', M', B', F', f')$ be objects of $\ConnlBundle$. 
  Let $\bfmu = (\mu, \mu_B) \in \Morphism(\lb{M}, \lb{M'})$.
  Let $\Rls \lb{M} = (\Rls \pi, M, B_M, F_M, f)$ and $\Rls \lb{M}' = (\Rls \pi', M', B_{M'}, F_{M'}, f')$ be as  in \cref{prop:release}.
  We are going to define the functor $\Rls$ on morphisms of $\ConnlBundle$.
  Let $\nu \colon B_M \to B_{M'}$ be the unique map defined by the commutative diagram (which also clarifies the relationships among $\nu$ and the maps defined in \cref{prop:release})
  \begin{equation*}
    \xymatrixcolsep{5pc}
    \xymatrix{
      \pi^{-1}(U) \ar[r]^{\mu} \ar@/_5pc/[dd]_{\Rls \pi} \ar[d]^{\pi \times \Conn} & (\pi')^{-1}(U') \ar@/^5pc/[dd]^{\Rls \pi'} \ar[d]_{\pi' \times \Conn} \\
      U \times \Conn \pi^{-1}(U) \ar[r]^{\mu_B \times \Conn \mu} \ar[d]^{\lambda_U} & U' \times \Conn (\pi')^{-1}(U') \ar[d]_{\lambda_{U'}} \\
      \coprod_{p \in U} \Conn \pi^{-1}(p) \ar[r]^{\nu} \ar[d]^{c_M} & \coprod_{p' \in U'} \Conn (\pi')^{-1}(p') \ar[d]_{c_{M'}} \\
      U \ar[r]^{\mu_B} & U'.
    }
  \end{equation*}
  Here $U$ is any open subset of $B$ such that $U$ and $U' \defeq \mu_B(U)$ are trivializing regions of $\pi$ and $\pi'$, respectively.\footnotemark
  \footnotetext{To define $\nu$ we only need the middle rectangle in the diagram but the diagram provides a useful way to keep in mind all maps involved. 
  Also, note that $\nu$ is uniquely defined because $\lambda_{U}$ is a diffeomorphism; it is well defined because the collection of all such $U$ is a base of the topology of $B$, and the definitions of $\nu$ on the preimages of overlapping regions by $c_M$ coincide.}
  Define
  \begin{equation} \label{eq:release-morphism}
    \Rls(\bfmu) \defeq (\mu, \nu).
  \end{equation}
  We have defined $\Rls$ on objects by formula~(\ref{eq:release-object}) and on morphisms by formula~(\ref{eq:release-morphism}).
  One can verify that $\Rls$ assigns the identity map to the identity map and is associative, and therefore $\Rls$ is a functor.
\end{proof}

We call $\Rls$ the \emph{release functor} (the relation between $\lb{M}$ and $\Rls \lb{M}$ is shown in \cref{fig:covering-space}). 

\inputfigure{covering-space}{covering-space}{
$\Rls \lb{M} = (\Rls \pi, M, B_M, F_M, f)$.
}

We apply \cref{lem:release-functor} to the inclusion of bundles:

\begin{corollary} \label{cor:release-subbundle}
  Let $\lb{M} = (\pi, M, B, F, f)$ be a filled bundle. 
  Let $A$ be a filled subspace of $\lb{M}$ and $\lb{A} \defeq \Res{\lb{M}}_A$. 
  Let $\iota \colon A \hookrightarrow M$ be the inclusion and define the morphism $\bfiota \defeq (\iota, \identity_B)$ from $\lb{A}$ to $\lb{M}$.
  Then $\Rls{\bfiota} = (\iota, \kappa)$ where $\kappa \colon B_A \to B_M$ is the unique map induced by the natural map $\Conn{\Res{\pi}_A^{-1}(U)} \to \Conn{\pi^{-1}(U)}$, for any trivializing region $U$ of $\pi$.
\end{corollary}

We call $\bfiota$ in Lemma~\ref{cor:release-subbundle} the \emph{(inclusion) embedding} of $\lb{A}$ into $\lb{M}$. 

\begin{lemma} \label{lem:covering-kappa}
  If the base of $\lb{M}$ is compact and the fiber of $\lb{A}$ has finitely many connected components then $\kappa$ in \cref{cor:release-subbundle} is a covering map between compact spaces.
\end{lemma}

\begin{proof}
  For any open $U \subset B$ whose closure is contained in a trivializing region of $\pi$, $\overline{U} \times \Conn \pi^{-1}(\overline{U})$ consists of finitely many copies of $\overline{U}$, so it is compact.
  Since $B$ has a cover by finitely many such $U$, $B_M$ is the union of finitely many sets diffeomorphic (by $\lambda_U^{-1}$) to $\overline{U} \times \Conn \pi^{-1}(\overline{U})$, so $B_M$ is compact.
  If the fiber of $\lb{A}$ has finitely many connected components, analoguous arguments ensure the compactness of $B_A$.

  Since $B_A$ is compact, $\kappa(B_A)$ is compact.
  But since $\kappa$ is a local diffeomorphism, $\kappa(B_A)$ is open in $B_M$, which means $\kappa$ is surjective, hence a covering map.
\end{proof}

If $\kappa \colon B' \to B$ is a covering space with $B$ connected we denote by $\# \kappa$ the \emph{number of sheets} of $\kappa$, that is, the number of $\kappa^{-1}(p)$ for any $p \in B$ (independent of $p$).

\section{Ingredients for the proof of the main theorem} \label{sec:ingredients}

In this section we prove most of the intermediate statements needed to prove the main theorem. 
The results of the section generalize, but do not follow directly from, the results of \cite[Sections 2 and 3]{PeTa18}, so we had to suitably modify the statements and adapt the proofs. 
The new difficulty is, as it was explained earlier, that the connectivity of the fibers of a bundle is not inherited by subbundles. 

\subsection{Slicing an exhausted bundle} \label{ssec:slicing}

\begin{lemma} \label{lem:subbundle-slicing}
  Let $\lb{M} = (\pi, M, B, F, f)$ be an exhausted bundle. 
  Let $A = \bigcup_{k = 1}^m A_k$, where $m \in \N$ and $A_k \in \Conn(f^{-1}(I_k))$ for some interval $I_k$ whose endpoints are regular values of $f$. 
  Then $A$ is a filled subspace of $\lb{M}$.
\end{lemma}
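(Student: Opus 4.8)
The plan is to reduce to the case $m = 1$ and then verify the defining product condition of Definition~\ref{def:filled-subspace} chart by chart. First I observe that a finite union of filled subspaces with respect to a fixed local trivialization is again a filled subspace: if $\phi_i(A_k \cap \pi^{-1}(U_i)) = U_i \times P_{k,i}$ for each $k$, then $\phi_i(A \cap \pi^{-1}(U_i)) = \bigcup_{k=1}^m U_i \times P_{k,i} = U_i \times \Pa{\bigcup_{k=1}^m P_{k,i}}$, so it suffices to prove that a single connected component $A_k \in \Conn(f^{-1}(I_k))$ is a filled subspace. Fixing a trivialized chart $(U_i, \phi_i)$ satisfying \ref{itm:def-filbund-filler}, the filler compatibility $\Res{f}_{\pi^{-1}(U_i)} = h \circ \proj_2 \circ \phi_i$ gives at once the local product description $f^{-1}(I_k) \cap \pi^{-1}(U_i) = \phi_i^{-1}(U_i \times Z)$, where $Z = h^{-1}(I_k)$; since the endpoints of $I_k$ are regular values of $f$ and $\Regular(f) = \Regular(h)$, the set $Z$ is a (possibly empty) submanifold with boundary of $F$.

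The key point is to control how the global connected component $A_k$ meets the open set $\pi^{-1}(U_i)$. I will use the elementary fact that if $C$ is a connected component of a space $S$ and $V \subseteq S$ is open, then $C \cap V$ is a union of connected components of $V$: each component of $V$ is connected and lies in $S$, hence is contained in a single component of $S$, so it is either inside $C$ or disjoint from it. Applying this with $S = f^{-1}(I_k)$, $C = A_k$, and $V = f^{-1}(I_k) \cap \pi^{-1}(U_i) = \phi_i^{-1}(U_i \times Z)$ shows $A_k \cap \pi^{-1}(U_i)$ is a union of connected components of $\phi_i^{-1}(U_i \times Z)$. Since $U_i$ is connected, the connected components of $U_i \times Z$ are exactly the sets $U_i \times Q$ with $Q \in \Conn(Z)$. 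Hence $\phi_i(A_k \cap \pi^{-1}(U_i)) = U_i \times P_{k,i}$, where $P_{k,i}$ is the union of those $Q \in \Conn(Z)$ with $\phi_i^{-1}(U_i \times Q) \subseteq A_k$, which is the required product form.

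The main obstacle is the role played by connectedness of the trivializing neighborhoods. If one were to allow a disconnected $U_i$ with a trivialization that is twisted differently over its components (permitted in principle by \ref{itm:def-filbund-group}, since elements of $\Diff(F, h)$ can permute the components of $Z$ while preserving $h$), then $A_k \cap \pi^{-1}(U_i)$ need not be a product, so the statement genuinely requires each $U_i$ connected; I therefore take this as part of the standing convention on local trivializations, which is harmless because $B$ is connected and any cover refines to one by connected trivializing neighborhoods. The rest is bookkeeping: the per-chart subsets $P_i = \bigcup_k P_{k,i}$ suffice for the definition of filled subspace, and their compatibility across overlaps is automatic from the remark in Definition~\ref{def:filled-subspace}, the transition maps $\phi_{ij}$ lying in $\Diff(F, h)$ and hence preserving $Z$ and permuting $\Conn(Z)$; the regularity of the endpoints of each $I_k$ then guarantees that $Z$, and so each $P_i$, is well-behaved, so that $\Res{\lb{M}}_A$ is defined once $A$ is known to be a filled subspace.
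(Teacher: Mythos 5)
Your proof is correct and follows essentially the same route as the paper's: the paper likewise verifies the product form chart by chart, writing $\phi_i(A_k \cap \pi^{-1}(U_i)) = U_i \times P_{ik}$ with $P_{ik}$ a disjoint union of connected components of $h^{-1}(I_k)$ and then taking $P_i = \bigcup_{k} P_{ik}$, though it compresses your component-of-an-open-subset argument into the phrase ``by continuity of $\phi_i$''. Your observation that connectedness of the trivializing neighborhoods $U_i$ is genuinely needed for the product decomposition (and is harmlessly arrangeable by refining the cover) is a point the paper leaves implicit.
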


\begin{proof}
  Let $\Set{(U_i, \phi_i)}_{i \in \mathcal{I}}$ be a local trivialization.
  By continuity, $\phi_i(A_k \cap \pi^{-1}(U_i))=U_i \times P_{ik}$ for some $P_{ik}$, which is the disjoint union of some connected components of $h_i^{-1}(I_k)$.
  Hence if $P_i = \bigcup_{k = 1}^m P_{ik}$, then $\phi_i(A \cap \pi^{-1}(U_i)) = U_i \times P_i$.
  So $A$ is a filled subspace of $\lb{M}$.
\end{proof}

From \cref{lem:subbundle-slicing} we can conclude:

\begin{corollary} \label{cor:m-alpha}
  Let $\lb{M} = (\pi, M, B, F, f)$ be an exhausted bundle and $\alpha \in \mathbb{R}$.
  Let $x_0 \in M$ be a minimum\footnotemark of $f$.
  \begin{itemize}
    \item
      If $\alpha \in \mathbb{R} \setminus f(M)$ define $M_{[\alpha]} \defeq \emptyset$;
    \item
      If $\alpha \in \Regular(f) \cap f(M)$ and $C_\alpha$ be a connected component of $f^{-1}(-\infty, \alpha]$ containing $x_0$. 
      Define $M_{[\alpha]}$ to be the union of $C_\alpha$ and the precompact connected components of $M \setminus C_\alpha$.
  \end{itemize}   
  Then $M_{[\alpha]}$ is a compact and connected filled subspace of $\lb{M}$ with respect to any local trivialization.
\end{corollary}

\footnotetext{The bundles $\lb{M}_{[\alpha]}$ and $\lb{A}_{[\alpha]}$ will depend on the choice of $x_0$.
For this reason, we fix the choice of $x_0$ throughout the paper.}

By \cref{cor:m-alpha}, we can define
\begin{equation*}
  \lb{M}_{[\alpha]} \defeq \Res{\lb{M}}_{M_{[\alpha]}}
\end{equation*}
a filled subbundle of $\lb{M}$.
From \cref{lem:subbundle-slicing} we also have:

\begin{corollary} \label{cor:a-alpha}
Let $\lb{M} = (\pi, M, B, F, f)$ be an exhausted bundle and $\alpha \in \mathbb{R}$.
Let $A$ be a filled subspace of $\lb{M}$ with respect to $\Set{(U_i, \phi_i)}_{i \in \mathcal{I}}$. 
Let $P \subset F$ be the fiber of the filled subbundle $\lb{A} \defeq \Res{\lb{M}}_{A}$  of $\lb{M}$.
Then $A_{[\alpha]} \defeq A \cap M_{[\alpha]}$ is a filled subspace of $\lb{M}$ with respect to $\Set{(U_i, \phi_i)}_{i \in \mathcal{I}}$.
\end{corollary}

By \cref{cor:m-alpha}, we can define
\begin{equation*}
  \lb{A}_{[\alpha]} \defeq \Res{\lb{M}}_{A_{[\alpha]}}.
\end{equation*} 
a filled subbundle of $\lb{M}$.

The following gives the existence of $\lb{A}_{[\alpha]}$ with good properties: \emph{saturated slices}.
If $\lb{A}_{[\alpha]}$ is a saturated slice of $\lb{A}$ then the connected components of the fiber of $\lb{A}_{[\alpha]}$ are in one-to-one correspondence with those of $\lb{A}$.

\begin{lemma} \label{lem:saturating-threshold}
  Let $\lb{M} = (\pi, M, B, F, f)$ be an exhausted bundle with compact base. 
  Let $A$ be a connected filled subspace of $\lb{M}$ and $\lb{A} = \Res{\lb{M}}_A$.
  For any $\alpha \in \Regular(f) \cap f(A)$, let $\bfiota_\alpha \colon \lb{A}_{[\alpha]} \hookrightarrow \lb{A}$ be the embedding, and let $\Rls \bfiota_\alpha = (\iota_\alpha, \kappa_\alpha)$.
  Then map $\kappa_\alpha$ is a covering, see \cref{lem:covering-kappa}.
  If for any $\alpha' \in \Regular(f)$ with $\alpha' \geq \alpha$, $\kappa_\alpha$ is a diffeomorphism we call $\lb{A}_{[\alpha]}$ a saturated slice of $\lb{A}$ by $\alpha$.
  Then for any such $\lb{A}$ saturated slices of $\lb{A}$ exist.
\end{lemma}

For the proof of \cref{lem:saturating-threshold}, see \cref{sapp:three-technical}.

\subsection{A tree structure on a connected exhausted bundle}

The following generalizes \cite[Lemma 2.3]{PeTa18}. See \cref{sapp:trees} for a review of trees.

\begin{lemma} \label{lem:bundle-slicing-tree}
  Let $\lb{M} = (\pi, M, B, F, f)$ be a connected exhausted bundle, $\alpha_0 = -\infty$ and $\Set{\alpha_\ell}_{\ell \in \mathbb{N}} \subset \Regular(f) \cap f(M)$ be an unbounded strictly increasing sequence.
  Let $\mathcal{L}(\ell)$ be the collection of $\lb{A} = \Res{\lb{M}}_A$ where $A$ is any unbounded connected component of $M \setminus M_{[\alpha_{\ell-1}]}$. 
  Then there is a tree $(\mathcal{T}, \supsetneq)$ of filled subbundles of $\lb{M}$ such that $\mathcal{T} = \coprod_{\ell \in \N \cup \Set{0}} \mathcal{L}(\ell)$, where for $\lb{A}, \lb{C} \in \mathcal{T}$, $\lb{A} \supsetneq \lb{C}$ if $\lb{C}$ is a filled subbundle of (not equal to) $\lb{A}$.
  Moreover, $\Pa{\mathcal{T}, {\supsetneq}}$ is a rooted locally finite leafless tree of height $\bfomega$, and $\mathcal{L}(\ell) = \tlevel(\ell)$ for each $\ell \in \N \cup \Set{0}$.
\end{lemma}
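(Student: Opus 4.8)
The plan is to read all of the tree properties off the nested family of superlevel regions. Write $R_l = M_{(\alpha_l,+\infty)}$ for the region whose unbounded components make up $\tlevel(l)$; since $\alpha_0 = -\infty$ we have $R_0 = M$, and as the sublevel sets $M_{(-\infty,\alpha_l]}$ increase with $l$ the regions decrease, $M = R_0 \supseteq R_1 \supseteq \cdots$. Because $\Set{\alpha_l}$ is unbounded and contained in $f(M)$, the exhaustion $f$ is unbounded above, so $M$ is noncompact and $M \in \Connub(R_0)$; the frontier of each $R_{l+1}$ lies in the level set $f^{-1}(\alpha_{l+1})$, which is compact as a closed subset of $f^{-1}((-\infty,\alpha_{l+1}])$. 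First I would build a parent assignment: given $A \in \Connub(R_{l+1})$, the connected set $A$ sits inside a single component $A'$ of $R_l \supseteq R_{l+1}$, and $A'$ is unbounded since it contains $A$. Moreover $A' \supsetneq A$: for $l \ge 1$ the closure of $A'$ meets $f^{-1}(\alpha_l)$, so $A'$ has points with $\alpha_l < f < \alpha_{l+1}$, which lie outside $R_{l+1} \supseteq A$; for $l = 0$ one has $A' = M \neq A$. This sends each node of $\tlevel(l+1)$ to a strictly larger node of $\tlevel(l)$.

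Next I would iterate this assignment from a node $\lb{A} = \Res{\lb{M}}_A$ of $\tlevel(l)$ to produce a finite chain of ancestors $A = A_l \subsetneq A_{l-1} \subsetneq \cdots \subsetneq A_0 = M$ with $A_k \in \Connub(R_k)$, and check that these give all the predecessors: if $\lb{C} = \Res{\lb{M}}_C$ is a node with $C \supsetneq A$ and $C \in \Connub(R_k)$, then $C$ is the component of $R_k$ containing the connected set $A$, so $C = A_k$. Hence $\tPre(\lb{A})$ consists of the $l$ ancestor nodes; being a chain under $\supsetneq$ it is finite, hence well ordered, which is exactly the tree axiom, and its order type gives $\tdepth(\lb{A}) = l$. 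Thus the sets $\tlevel(l)$ really are the levels of $(\mathcal{T},\supsetneq)$, and $\mathcal{T} = \coprod_l \tlevel(l)$ is a disjoint union because a component of $R_l$ contains points with $f$ just above $\alpha_l$ while a component of $R_{l'}$ with $l' > l$ does not. Since $M$ is connected and noncompact, $\Connub(R_0) = \Set{M}$, so $\tRoot(\mathcal{T}) = \Set{\lb{M}}$ and the tree is rooted.

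The substance is local finiteness and the leafless property, both of which I would get from the exhaustion hypothesis. The children of $\lb{A}$ are the unbounded components of $A \cap R_{l+1} = A \cap \Set{f > \alpha_{l+1}}$. Every component of $\Set{f > \alpha_{l+1}}$ has nonempty frontier (an empty frontier would make it clopen, hence all of the connected $M$), that frontier lies in the compact hypersurface $f^{-1}(\alpha_{l+1})$, and via a one-sided collar each component of $f^{-1}(\alpha_{l+1})$ is adjacent to a single such component; as $f^{-1}(\alpha_{l+1})$ has finitely many components, so does $\Set{f > \alpha_{l+1}}$. Hence $\lb{A}$ has finitely many children and the tree is locally finite (in fact each level is finite). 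For leaflessness, $A \cap \Set{f \le \alpha_{l+1}}$ is precompact, being contained in $f^{-1}((-\infty,\alpha_{l+1}])$, whereas $A$ is unbounded; so $A \cap \Set{f > \alpha_{l+1}}$ is not precompact, and being a finite union of components it has an unbounded one, which is a child of $\lb{A}$. Iterating downward from the root then populates every finite level, so $\Set{\tdepth(x) + 1 : x \in \mathcal{T}} = \Set{1,2,3,\dotsc}$ and $\theight(\mathcal{T}) = \upomega$.

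The step I expect to be the main obstacle is precisely this finiteness-and-persistence analysis: one must simultaneously forbid an end from splitting into infinitely many pieces and forbid an end from vanishing as the threshold rises. Both are governed by the exhaustion function, which forces $f^{-1}(\alpha_{l+1})$ to be compact with finitely many components and makes the collar below $\alpha_{l+1}$ precompact, so the one-sided collar argument at the regular level set $f^{-1}(\alpha_{l+1})$ is the crux; it is to make these level sets well-behaved closed hypersurfaces that the thresholds $\alpha_l$ are taken to be regular values.
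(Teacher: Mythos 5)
Your proof is correct and follows essentially the same route as the paper's: both read the tree axioms off the nesting of unbounded connected components of the superlevel regions and use compactness of the regular level sets and of $M_{(-\infty,\alpha_l]}$ to get finiteness of each level, the root, and height $\upomega$. You supply details the paper leaves implicit (the collar-counting argument for local finiteness and the explicit leaflessness argument); the only slight imprecision is conflating $M_{(\alpha,+\infty)}$ with $\Set{f>\alpha}$, which is harmless since the frontier of $M_{(\alpha,+\infty)}$ still lies in $f^{-1}(\alpha)$ and its complement is still compact.
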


\begin{proof}
  Let $\lb{A}_i \in \mathcal{L}(\ell_i) \subset \mathcal{T}$ where $\ell_i \in \mathbb{N} \cup \Set{0}$, for $i = 1, 2$ and $3$.
  By definition of connected components we have the following: if $\lb{A}_1 \supsetneq \lb{A}_2$, then $\ell_1 < \ell_2$; if $\lb{A}_1, \lb{A}_2 \supsetneq \lb{A}_3$ and $\ell_1 < \ell_2$, then $\lb{A}_1 \supsetneq \lb{A}_2$.
  Hence $\Pa{\mathcal{T}, \supsetneq}$ is a tree.
  The only root of $\mathcal{T}$ is $\lb{M} \in \tlevel(0)$, by induction $\mathcal{L}(\ell)$ is the $\ell$-th level of $\mathcal{T}$, which is finite, so $\mathcal{T}$ is locally finite.
  For any $\lb{A} \in \tlevel(\ell)$ with $A = \undsp(\lb{A})$, $A \setminus A_{[\alpha_{\ell+1}]} \neq \emptyset$, so $\mathcal{T}$ is leafless.
  Hence $\Setby{\tdepth(\lb{A})}{\lb{A} \in \mathcal{T}} = \mathbb{N} \cup \Set{0}$, and $\theight(\mathcal{T}) = \bfomega$.
\end{proof}

\subsection{Transferring volumes within fibers} \label{ssec:compact-support}

Next we will use the analytic tool, the work of Bueler--Prokhorenkov on Hodge theory \cite{MR1893604}, to prove a series of lemmas that allow us to move the volumes within the fibers of an exhausted bundle in various manners. 
First we explain how to move volumes within the interiors of compact submanifolds of the fibers. 

\begin{lemma} \label{lem:compact-exhausted-bundle}
  Let $\lb{M} = (\pi, M, B, F, f)$ be a filled bundle with compact base.
  Then following hold:
  \begin{enumerate}
    \item[\textup{(1)}]
      If $h_i$, $i \in \mathcal{I}$ in \cref{def:filled-bundle} are exhaustion functions, then $\lb{M}$ is an exhausted bundle.
    \item[\textup{(2)}]
      Suppose that $\lb{M}$ is also exhausted. 
      Then $B, F$ are compact if and only if $M$ is compact.
  \end{enumerate}
\end{lemma}

\begin{proof}
  Since $B$ is compact, let $\Set{(U_i, \phi_i)}_{i \in \mathcal{I}}$ be a local trivialization of $\lb{M}$ such that $\mathcal{I}$ is finite and $\overline{U_i}$ is compact for any $i \in \mathcal{I}$.
  Now that $h_i$ are exhaustion functions for $F$, let $\alpha \in \Regular(f)$, then
  \begin{equation*}
    f^{-1}((-\infty, \alpha]) = \bigcup_{i \in \mathcal{I}} f^{-1}((-\infty, \alpha]) \cap \pi^{-1}(\overline{U}_i) = \bigcup_{i \in \mathcal{I}} \phi_i \Pa{\overline{U}_i \times h_i^{-1}((-\infty, \alpha])}
  \end{equation*}
  is compact.
  Hence $f$ is an exhaustion function for $M$ and then $\lb{M}$ is an exhausted bundle. 
  This proves (1).

  If $\lb{M}$ is exhausted and $B$, $F$ are compact, then $M = \bigcup_{i \in \mathcal{I}} \pi^{-1}(\overline{U_i}) = \bigcup_{i \in \mathcal{I}} \phi_i \Pa{\overline{U_i} \times F}$ is compact.
  If $\lb{M}$ is exhausted and compact, then $B = \pi(M)$ is compact, and $f$ is bounded.
  So $h_i$ is bounded, which implies the compactness of $F$. 
  This proves (2).
\end{proof}

The following generalizes \cite[Lemma 3.1]{PeTa18}

\begin{lemma} \label{lem:fiber-collar-neighborhood}
  Let $\lb{M} = (\pi, M, B, F, f)$ be a filled bundle with compact base.
  Suppose that $N$ is a compact connected hypersurface of $M$ through regular points of $f$, on which $\pi$ is a submersion.
  Then:
  \begin{itemize}
    \item[{\rm (i)}]
      there exists $\varepsilon > 0$ and a diffeomorphism $\Phi \colon N \times (-\varepsilon, \varepsilon) \to V_N$ such that $V_N \subset M$ is a neighborhood of $N$, $\Phi(y, 0) = y$, $\pi(\Phi(y, s)) = \pi(y)$ and $f(\Phi(y, s)) = f(y) + s$ for any $(y, s) \in N \times (-\varepsilon, \varepsilon)$.
    \item[{\rm (ii)}]
      The set $V_N \setminus N$ has exactly two connected components, each characterized by the sign of $\proj_2 \circ \Phi^{-1}$.
    \item[{\rm (iii)}]
      If $N$ is a filled subspace of $\lb{M}$, $V_N$ is a filled subspace of $\lb{M}$.
    \item[{\rm (iiv)}]
      If $\lb{N} = \Res{\lb{M}}_N$ has connected fiber, $\lb{V}_{\lb{N}}= \Res{\lb{M}}_{V_N}$ has connected fiber.
  \end{itemize}
\end{lemma}

\begin{proof}
  Let $\mathrm{V}\lb{M} = \ker (\der \pi \colon \mathrm{T}M \to \mathrm{T}B)$ be the vertical tangent bundle of $\lb{M}$ and $g$ be any Riemannian metric on $M$.
  Let $Y \in \Gamma(\mathrm{V}\lb{M})$ be an extension of $\nabla\Pa{\Res{f}_{\pi^{-1}(p)}}$, the gradient of $\Res{f}_{\pi^{-1}(p)}$, for any $p \in B$.
  Then $\Res{Y(f)}_{\pi^{-1}(p)} = \Res{Y}_{\pi^{-1}(p)}\Pa{\Res{f}_{\pi^{-1}(p)}} = \abs{\nabla \Pa{\Res{f}_{\pi^{-1}(p)}}}_g^2$.
  Therefore there exists a neighborhood $\widetilde{V}_N \supset N$ such that $Y(f) > 0$ in $\widetilde{V}_N$.
  Let $X \in \Gamma(\mathrm{V}\lb{M})$ be such that $X(x) = \abs{\nabla \Pa{\Res{f}_{\pi^{-1}(\pi(x))}}(x)}_g^{-2} Y(x)$ for $x \in \widetilde{V}_N$, then $X(f) = 1$ on $\widetilde{V}_N$.
  Consider the flow of $X$, $\Phi \colon N \times (-\varepsilon, \varepsilon) \to M, (y, s) \mapsto x$, that is $\Phi(y, 0) = y$ for all $y \in N$ and $\frac{\partial \Phi}{\partial s}(y, s) = X(\Phi(y, s))$ for all $(y, s) \in N \times (-\varepsilon, \varepsilon)$, for $\varepsilon > 0$ small enough such that the image of $\Phi$ is contained in $\widetilde{V}_N$.
  Then we define $V_N = \Phi(N \times (-\varepsilon, \varepsilon))$.
  Since $X$ is vertical and $X(f) = 1$ in $V_N$, we have $\pi(\Phi(y, s)) = \pi(y)$ and $f(\Phi(y, s)) = f(y) + s$ for any $(y, s) \in N \times (-\varepsilon, \varepsilon)$, and $\Phi$ is a diffeomorphism.
  The two connected components of $V_N \setminus N$ are $\Phi(N \times (-\varepsilon, 0))$ and $\Phi(N \times (0, \varepsilon))$.
  If $N$ is a filled subspace of $\lb{M}$ then there is $\alpha \in \Regular(f)$ such that $N$ is the union of some connected components of $f^{-1}(\alpha)$, so $V_N$ is the union of some connected components of $f^{-1}((\alpha - \varepsilon, \alpha + \varepsilon))$ and by \cref{lem:subbundle-slicing} $V_N$ is a filled subspace of $\lb{M}$.
  If $\lb{N}$ has connected fiber, then since the fiber of $\lb{V}_{\lb{N}}$ is the image of the fiber of $\lb{N}$ under the flow of $X$ restricted on the fiber, it is connected.
\end{proof}

For any filled bundle $\lb{M}$ we define the \emph{fiber exterior derivative} 
\begin{equation*}
  \fibder \colon \Omega^q_F(\lb{M}) \rightarrow \Omega^{q+1}_F(\lb{M}), \eta \mapsto \fibder \eta
\end{equation*}
by $(\fibder \eta)_p = \der \eta_p$ for $p \in B$.

\begin{lemma} \label{thm:prelimitive-compact-support-bundle}
  Let $\lb{M} = (\pi, M, B, F, f)$ be a filled bundle with compact base.
  Suppose $W$ is an open subset of $M$ such that $\overline{W}$ is a compact submanifold with boundary $\partial W$, and $Z \subset F$ makes $\lb{W} = (\Res{\pi}_W, W, B, Z, \Res{f}_W)$ a filled subbundle of $\lb{M}$ with connected fiber.
  Let $\xi \in \pcspt{\Omega^k_F}(\lb{M})$.
  If $\support \xi \subset W$ and $\Res{\xi}_{W \cap \pi^{-1}(p)} \in \der \Omega^{k-1}_\cspt(W \cap \pi^{-1}(p))$ for any $p \in B$, then there is an $\eta \in \pcspt{\Omega^{k-1}_F}(\lb{M})$ such that $\xi = \fibder \eta$ and $\support \eta \subset \overline{W}$.
\end{lemma}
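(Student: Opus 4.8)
The plan is to apply the fiberwise operator $I_Z^q$ from Theorem~\ref{thm:prelimitive-compact-support} to produce $\eta$, and the essential point will be that this construction respects smooth dependence on the base parameter $p \in B$. First I would trivialize: since $\overline{W}$ is compact and $\lb{M}$ has compact base, cover $B$ by finitely many trivializing neighborhoods $\Set{(U_i, \phi_i)}_{i \in \mathcal{I}}$ (with $\mathcal{I}$ finite and each $\overline{U_i}$ compact) adapted to the filled subbundle $\lb{W}$, so that under $\phi_i$ the subset $W \cap \pi^{-1}(U_i)$ becomes $U_i \times Z$. In such a chart, the fiber form $\xi$ is pulled back to a family $\Set{\xi_p}_{p \in U_i}$ of compactly supported $q$-forms on $Z \subset F$, depending smoothly on $p$, each supported in $Z$ and each exact in $\der\Omega^{q-1}_\cspt(Z)$ by hypothesis.

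Next I would apply the operator $I_Z^q$ fiberwise. By Theorem~\ref{thm:prelimitive-compact-support}, for each $p$ the form $\eta_p = I_Z^q(\xi_p)$ is a compactly supported $(q-1)$-form on $F$ with $\support \eta_p \subset \overline{Z}$ and $\der \eta_p = \xi_p$. Crucially, $I_Z^q$ is stated to be an operator \emph{preserving smooth families} of $q$-forms, so the resulting family $\Set{\eta_p}_{p \in U_i}$ is again smooth in $p$; pulling back through $\phi_i$, this defines a local fiber $(q-1)$-form $\eta^{(i)}$ on $\Res{\pi}_{U_i}^{-1}(U_i)$ with $\fibder \eta^{(i)} = \Res{\xi}$ and support contained in $\overline{W}$.

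The main obstacle is patching these local solutions into a single globally defined fiber $(q-1)$-form $\eta$, because $I_Z^q$ is a linear operator and hence does not commute with the nonlinear transition maps $\phi_{ij}(p) \in \Diff(F, h)$: on overlaps $U_i \cap U_j$ the two local primitives $\eta^{(i)}$ and $\eta^{(j)}$ need not agree. To resolve this I would take a smooth partition of unity $\Set{\rho_i}_{i \in \mathcal{I}}$ on $B$ subordinate to $\Set{U_i}$ and set
\begin{equation*}
  \eta = \sum_{i \in \mathcal{I}} (\rho_i \circ \pi) \, \eta^{(i)}.
\end{equation*}
Since each $\der$ in the fiber direction annihilates the base-pulled-back cutoffs $\rho_i \circ \pi$ (these are constant along each fiber $\pi^{-1}(p)$), we get $\fibder \eta = \sum_i (\rho_i \circ \pi)\, \fibder \eta^{(i)} = \Pa{\sum_i \rho_i \circ \pi}\,\xi = \xi$ fiberwise, using $\sum_i \rho_i \equiv 1$. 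The support condition is preserved because each $\support \eta^{(i)} \subset \overline{W}$ and multiplication by $\rho_i \circ \pi$ cannot enlarge support; compactness of $\overline{W}$ gives $\eta \in \pcspt{\Omega^{q-1}_F}(\lb{M})$. The smoothness of $\eta$ as a genuine fiber $(q-1)$-form follows from the smoothness of each local family together with the smoothness of the partition of unity, so $\eta$ restricts correctly to a global element $\omega \in \Omega^{q-1}(M)$ in the sense of Definition~\ref{def:fiber-form}.
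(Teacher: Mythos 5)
Your proposal is correct and is essentially the paper's own proof: trivialize over a finite cover adapted to $\lb{W}$, apply the operator $I_Z^q$ of Theorem~\ref{thm:prelimitive-compact-support} fiberwise, and patch with a partition of unity pulled back from the base, using that $\fibder$ annihilates functions constant along fibers. The only cosmetic difference is that the paper applies $I_Z^q$ to the already cut-off form $(\chi_i \circ \pi)\cdot\xi$ whereas you cut off after applying $I_Z^q$; by linearity of $I_Z^q$ the two formulas agree.
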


\begin{proof}
  Let $\Set{(U_i, \phi_i)}_{i \in \mathcal{I}}$ be a local trivialization of $\lb{M}$ with respect to which $W$ is a filled subspace of $\lb{M}$, then we can assume $\phi_i\Pa{W \cap \pi^{-1}(U_i)} = U_i \times Z$ for any $i \in \mathcal{I}$.
  By \cref{lem:compact-exhausted-bundle}, $\overline{Z}$ is compact.
  Let $\Set{\chi_i}_{i \in \mathcal{I}}$ be a partition of unity subordinated to the open cover $\Set{U_i}_{i \in \mathcal{I}}$ of $B$.
  We apply \cref{thm:prelimitive-compact-support} to $Z$ to get an operator $I_Z^q$, then define $\eta = \sum_{i \in \mathcal{I}} \phi_i^* I_Z^k \Pa{\phi_i^{-1}}^* ((\chi_i \circ \pi) \cdot \xi)$.
  Since $\der \circ I_Z^k = \identity$, we have $\xi = \fibder \eta$ and $\support \eta \subset \overline{W}$.
\end{proof}

The following is an extension of \cite[Lemma 4.5]{PeTa18}.

\begin{lemma} \label{lem:lemma1}
  Let $\lb{M} = (\pi, M, B, F, f)$ be a filled bundle with compact base.
  Let $\lb{V}$ be a filled subbundle of $\lb{M}$ with connected fiber.
  Suppose that $V = \undsp(\lb{V})$ is an open subset of $M$ such that $\overline{V}$ is a compact submanifold with boundary $\partial V$.
  Let $\omega, \tau \in \FVform(\lb{M})$ such that $\support(\omega - \tau) \subset V$ and 
  \begin{equation} \label{eq:lemma1-equal-volume}
    \int_{\lb{V}} \omega = \int_{\lb{V}} \tau.
  \end{equation}
  Then there is a fiber diffeomorphism $\varphi \colon M \to M$ such that $\varphi$ is the identity in a neighborhood of $M \setminus V$ and $\varphi^* \omega = \tau$.
\end{lemma}

\begin{proof}
  By \cref{lem:fiber-collar-neighborhood} applied to each connected component of $N = \partial V$ there exist $\varepsilon > 0$ and $V_N$ a neighborhood of $N$ satisfying (i)-(iii).
  Since $B$ is compact and $\support(\omega - \tau) \subset V$, we may reduce $\varepsilon$ as needed so that $\support(\omega - \tau) \subset V \setminus \overline{V_N}$.
  Let $W = V \setminus \overline{V_N}$ and  $W_p = W \cap \pi^{-1}(p)$, $p \in B$. It follows from \cref{eq:lemma1-equal-volume} that $\Res{(\omega_p - \tau_p)}_{W_p} \in \der \Omega_\cspt^{\dim F-1}(W_p)$.
  Therefore by \cref{thm:prelimitive-compact-support-bundle} there exists $\sigma \subset \pcspt{\Omega^{\dim F-1}_F}(\lb{M})$ with $\support \sigma \subset \overline{W}$ such that $\fibder \sigma = \omega - \tau$. 
  Define
  \begin{equation*}
    \omega_t = (1 - t)\omega + t\tau \in \FVform(\lb{M}) \quad \forall t \in [0, 1].
  \end{equation*}
  Since $\omega_t$ is nowhere vanishing there exists a smooth family of vertical vector fields $\Set{X_t}_{t \in [0, 1]} \subset \Gamma(\mathrm{V}\lb{M})$ where each $X_t$ is supported in $\overline{W}$ and such that 
  \begin{equation*}
    \omega_t(X_t, \cdot) = \sigma.
  \end{equation*}
  Let $\varphi_t \colon M \to M$ be a fiber diffeomorphism generated by $X_t$ that is the identity outside of $\overline{W}$.
  Then $\varphi = \varphi_1^{-1}$ satisfies the required properties.
\end{proof}

Now we carry out the transferring of volumes.
The following three lemmas correspond to \cite[Lemmas 4.6-4.8]{PeTa18} in the case of smooth families. 
The statements and the proofs are analogous but more delicate to implement due to the role that the release functor plays. 

\begin{lemma} \label{lem:volume-lemma}
  Let $\lb{M} = (\pi, M, B, F, f)$ be a filled bundle with compact base.
  Let $\lb{K}$ be a connected filled subbundle of $\lb{M}$ whose underlying space $K$ is a compact manifold with or without boundary which has a nonempty interior. 
  Let $B_K$ be the base of $\Rls \lb{K}$, and let $w \in \smth(B_K; \mathbb{R})$.
  If $\omega \in \FVform(\lb{M})$, then there exists $\tau \in \FVform(\lb{M})$ such that $\support(\omega - \tau) \subset K^\circ$ and 
  \begin{equation*}
    \rint_{\lb{K}} \tau = w.
  \end{equation*}  
\end{lemma}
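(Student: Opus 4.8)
The plan is to look for $\tau$ of the multiplicative form $\tau = g\,\omega$, where $g\colon M\to(0,\infty)$ is smooth, equals $1$ outside a compact subset of $K^\circ$, and is constant along each released fiber of $\lb{K}$. Any such $g$ makes $\tau$ a fiber volume form with $\support(\omega-\tau)=\support(g-1)\subset K^\circ$, so the entire content of the lemma is to arrange $\rint_{\lb{K}}(g\,\omega)=w$. Since the integral of a volume form over a compact fiber component is strictly positive, $w$ is necessarily positive; we assume $w>0$, the only case in which the statement can hold. Write $\varpi=\Rls(\Res{\pi}_K)\colon K\to B_K$ for the release projection. By Lemma~\ref{release-covering-space} this exhibits $K$ as a fiber bundle over $B_K$ with connected compact fiber $P_K$, and $B_K$ is a compact manifold: it covers the compact base $B$ with finitely many sheets, since each fiber of $K$ is compact and hence has finitely many components. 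As $K^\circ\neq\emptyset$, the fiber $P_K$ has nonempty interior. Finally $a=\rint_{\lb{K}}\omega$ is a smooth, everywhere positive function on $B_K$, being the integration of the smooth top-form $\omega$ over the compact fiber $P_K$.

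Next I would fix a nonnegative fiber function $\beta\colon M\to[0,\infty)$, compactly supported in $K^\circ$, with two properties on each released fiber $C=\varpi^{-1}(p')$: first, $\Set{\beta>0}\cap C$ is a nonempty open set, so $\int_C\beta\,\omega>0$; second, the $\omega$-measure of the uncovered part $\ell(p')=\int_{C\cap\Set{\beta=0}}\omega$ satisfies $\ell<w$ everywhere on $B_K$. Such a $\beta$ is built exactly as in the proof of Lemma~\ref{thm:prelimitive-compact-support-bundle}: choose a finite local trivialization $\Set{(U_j,\phi_j)}_{j}$ of $\Rls\lb{K}$, in each chart a fixed nonnegative bump $b_j$ on $P_K$ supported in $P_K^\circ$ and positive off an $\varepsilon$-collar of $\partial P_K$, a partition of unity $\Set{\chi_j}$ on $B_K$ subordinate to $\Set{U_j}$, and set $\beta=\sum_j(\chi_j\circ\varpi)\cdot\phi_j^*\proj_2^*b_j$. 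Then $\Set{\beta=0}\cap C$ is contained in the pulled-back collars; by compactness of $B_K$ and continuity of $\omega$, shrinking $\varepsilon$ makes these collars of arbitrarily small, uniformly bounded $\omega$-measure, so $\ell<\min_{B_K}w$ can be achieved.

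With $\beta$ fixed, consider for $c\in\mathbb{R}$ the rescaling $e^{c\beta}$ and the smooth function
\begin{equation*}
  J\colon \mathbb{R}\times B_K\to(0,\infty),\qquad J(c,p')=\int_{\varpi^{-1}(p')}e^{c\beta}\,\omega.
\end{equation*}
For fixed $p'$, $\partial_c J=\int_C\beta\,e^{c\beta}\,\omega>0$, so $J(\cdot,p')$ is a strictly increasing diffeomorphism from $\mathbb{R}$ onto $(\ell(p'),+\infty)$ (let $c\to-\infty$ and $c\to+\infty$ and use dominated, resp.\ monotone, convergence). Since $\ell(p')<w(p')<+\infty$, there is a unique $c(p')$ with $J(c(p'),p')=w(p')$; as $J$ is smooth with $\partial_c J>0$, the implicit function theorem gives $c\in\smth(B_K;\mathbb{R})$. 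Now set $g=\exp\!\big((c\circ\varpi)\,\beta\big)$ on $K$ and $g\equiv1$ on $M\setminus K$; this is a well-defined positive smooth function on $M$ because $\beta$ is compactly supported in $K^\circ$. Then $\tau=g\,\omega\in\FVform(\lb{M})$, $\support(\omega-\tau)\subset\support\beta\subset K^\circ$, and $(\rint_{\lb{K}}\tau)(p')=J(c(p'),p')=w(p')$, as required.

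The main obstacle is the tension between keeping $\tau$ a genuine volume form (hence positive, so only positive targets $w$ are reachable) and being able to reach small positive values of the integral: a naive additive correction $\tau=\omega+(\text{bump})$ fails to stay positive when the integral must be decreased. The exponential ansatz $g=\exp\!\big((c\circ\varpi)\beta\big)$ resolves this by guaranteeing positivity for every real $c$, while the choice of $\beta$ with uniformly small uncovered measure guarantees that the reachable range $(\ell,+\infty)$ contains all prescribed values of $w$; the smooth dependence of the solution on the parameter is then a clean application of the implicit function theorem, using compactness of $B_K$ throughout.
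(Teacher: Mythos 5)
Your argument is correct, but it takes a genuinely different route from the paper's. The paper's proof is additive and very short: it picks a fiber volume form $\xi$ agreeing with $\omega$ outside $K^\circ$ with $\rint_{\lb{K}}\xi < w$, and a nonnegative fiber top-form $\eta$ supported in $K^\circ$ with $\rint_{\lb{K}}\eta > 0$, and sets $\tau = \xi + \Rls(\Res{\pi}_K)^*\bigl((w - \rint_{\lb{K}}\xi)/\rint_{\lb{K}}\eta\bigr)\,\eta$; positivity of $\tau$ is immediate because the correction added to the volume form $\xi$ is nonnegative, and smooth dependence on the base point comes for free from the explicit pullback formula. You instead use a multiplicative ansatz $\tau = e^{(c\circ\varpi)\beta}\,\omega$ and solve for the fiberwise constant $c$ by strict monotonicity of $J$ in $c$ together with the implicit function theorem. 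The essential difficulty is the same in both: one must be able to push the released integral strictly \emph{below} $w$ without touching $\omega$ near $\partial K$. The paper simply asserts the existence of such a $\xi$, whereas you actually construct the mechanism for it via the collar estimate $\ell < \min_{B_K} w$, so your write-up is more self-contained on precisely the point the paper glosses over, at the cost of invoking the implicit function theorem where the paper uses a one-line formula. Both arguments need $w > 0$, which you correctly identify as an implicit hypothesis (the lemma is only invoked with positive right-hand sides in the filtration theorem). One small slip in your plan sentence: a $g$ that is constant along released fibers \emph{and} equal to $1$ near $\partial K$ would be identically $1$ on every fiber component meeting the boundary, so that description is inconsistent; but the $g$ you actually build is only fiberwise constant in the coefficient $c\circ\varpi$ of the exponent, which is all the computation of $\rint_{\lb{K}}\tau$ requires, so the executed argument is unaffected.
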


\begin{proof}
  Let $\xi \in \FVform(\lb{M})$ be such that $\support(\xi - \omega) \subset K^\circ$ and $\rint_{\lb{K}} \xi < w$.
  Let $\eta \geq 0$ be a fiber top-form on $\lb{M}$ such that $\support \eta \subset K^\circ$ and $\rint_{\lb{K}} \eta > 0$.
  Define
  \begin{equation*}
    \tau = \xi + \Rls (\Res{\pi}_K)^*\Pa{\frac{w - \rint_{\lb{K}} \xi}{\rint_{\lb{K}} \eta}} \eta,
  \end{equation*}
   where $\Rls (\Res{\pi}_K)$ be the bundle map of $\Rls(\lb{K})$.
\end{proof}

\begin{lemma} \label{lem:lemma2}
  Let $\lb{W} = (\pi, W, B, Z, f)$ be a filled bundle with compact base and oriented connected fiber.
  Let $\lb{N}$ be a filled subbundle of $\lb{W}$ with connected fiber. 
  Suppose that $N = \undsp(\lb{N})$ is a connected hypersurface of $W$ such that $W \setminus N$ has two components, say $W^+$ and $W^-$.
  Let $\lb{W}^+ = \Res{\lb{W}}_{W^+}$ and $\lb{W}^- = \Res{\lb{W}}_{W^-}$.
  Let $\omega, \tau \in \FVform(\lb{W})$. 
  Then there is a neighborhood $V_N$ of $N$, and a fiber diffeomorphism $\varphi \colon W \to W$ with the following properties:
  \begin{itemize}
    \item[\textup{(1)}]
      $\varphi$ is the identity in a neighborhood of $W \setminus V_N$;
    \item[\textup{(2)}]
      $\varphi^*\omega = \tau$ in a neighborhood of $N$;
    \item[\textup{(3)}]
      $\int_{\lb{W}^+} \varphi^* \omega = \int_{\lb{W}^+} \omega$; and $\int_{\lb{W}^-} \varphi^* \omega = \int_{\lb{W}^-} \omega$.
  \end{itemize}
\end{lemma}

\begin{proof}
  By \cref{lem:fiber-collar-neighborhood}, there exists $\lb{V}_{\lb{N}}$ with underlying space $V_N$ as a filled subbundle of $\lb{W}$, $\varepsilon > 0$ and a diffeomorphism $\Phi \colon N \times (-\varepsilon, \varepsilon) \to V_N$ such that $V_N \subset M$ is a neighborhood of $N$, $\Phi(y, 0) = y$, $\pi(\Phi(y, s)) = \pi(y)$ and $f(\Phi(y, s)) = f(y) + s$ for any $(y, s) \in N \times (-\varepsilon, \varepsilon)$.
  Let $V_N^+ = \Phi(N \times (0, \varepsilon))$ and $V_N^- = \Phi(N \times (-\varepsilon, 0))$.
  We consider $\Phi(N \times [0, \varepsilon))$ first. 
  By compactness of $B$ there exists $0 < \delta < \varepsilon / 2$ such that 
  \begin{equation*}
    \int_{\Res{\lb{M}}_{\Phi(N \times (0, \varepsilon-\delta))}} \tau > \int_{\Res{\lb{M}}_{\Phi(N \times (0, \delta))}} \omega \quad \text{and} \quad \int_{\Res{\lb{M}}_{\Phi(N \times (0, \varepsilon-\delta))}} \omega > \int_{\Res{\lb{M}}_{\Phi(N \times (0, \delta))}} \tau.
  \end{equation*}

  Let $s = \proj_2 \circ \Phi^{-1} \colon \Phi(N \times (-\varepsilon, \varepsilon)) \to (-\varepsilon, \varepsilon)$ and choose a smooth function $\zeta \colon (0, \varepsilon) \times (0, 1) \to [0,1]$ such that $\zeta(s, \cdot) = 1$ if $s \in (0, \delta]$, $\zeta(s, \cdot) = 0$ if $s \in [\varepsilon - \delta, \varepsilon)$, $\lim_{t \to 0+} \zeta(s, t) = 0$, $\frac{\partial \zeta}{\partial t}(s, \cdot) > 0$, and $\lim_{t \to 1-} \zeta(s, t) = 1$ if $s \in (\delta, \varepsilon - \delta)$.
  Let $\lb{V}_{\lb{N}}^+ = \Res{\lb{M}}_{V_N^+}$ and $\lb{V}_{\lb{N}}^- = \Res{\lb{M}}_{V_N^-}$.
  Then $\theta(t, \cdot) = \int_{\lb{V}_{\lb{N}}^+} \zeta(s(\cdot), t) \tau - \int_{\lb{V}_{\lb{N}}^+} \zeta(s(\cdot), 1 - t) \omega$ is smooth on $(0, 1) \times B$ and $\frac{\partial \theta}{\partial t}(t, \cdot) = \int_{\lb{V}_{\lb{N}}^+} \frac{\partial \zeta}{\partial t}(s(\cdot), t) \tau + \int_{\lb{V}_{\lb{N}}^+} \frac{\partial \zeta}{\partial t}(s(\cdot), 1 - t) \omega > 0$ for any $t \in (0, 1)$ and $\lim_{t \to 0+} \theta(t, p) < 0 < \lim_{t \to 1-} \theta(t, p)$ for any $p \in B$.
  Then for every $p \in B$ there is a unique smooth $t = t(p) \colon B \to \mathbb{R}$ solving $\theta(t(p), p) = 0$.
  The functions $\lambda(x) \defeq \zeta(s(x), t(\pi(x)))$ and $\mu(x) \defeq \zeta(s(x), 1 - t(\pi(x)))$ on $V_N^+$ are smooth in $x$.
  By analogy we define $\lambda$ and $\mu$ in $V_N^-$, then let $\lambda = \mu = 1$ on $N$ and $\lambda = \mu = 0$ in $W \setminus V_N$.
  In this way, $\lambda, \mu \colon W \to \mathbb{R}$ are defined smoothly, such that $\int_{\lb{V}_{\lb{N}}^+} \mu \omega = \int_{\lb{V}_{\lb{N}}^+} \lambda \tau$ and $\int_{\lb{V}_{\lb{N}}^-} \mu \omega = \int_{\lb{V}_{\lb{N}}^-} \lambda \tau$.

  Hence $\int_{\lb{V}_{\lb{N}}^+} \Pa{(1 - \mu) \omega + \lambda \tau} = \int_{\lb{V}_{\lb{N}}^+} \omega$ and $\int_{\lb{V}_{\lb{N}}^-} \Pa{(1 - \mu) \omega + \lambda \tau} = \int_{\lb{V}_{\lb{N}}^-} \omega$. 
  It follows from \cref{lem:lemma1} applied to $(1 - \mu) \omega + \lambda \tau$ and $\omega$ on $\lb{V}_{\lb{N}}^+$ and $\lb{V}_{\lb{N}}^-$ that there exists a fiber diffeomorphism $\varphi \colon W \to W$ such that $\varphi = \identity$ in $V \setminus \Phi(N \times (\delta - \varepsilon, \varepsilon - \delta))$ and $\varphi^* \omega = (1 - \mu) \omega + \lambda \tau$. 
  Hence $\varphi$ satisfies the conditions claimed in the statement.
\end{proof}

\begin{lemma} \label{lem:lemma3}
  Let $\lb{M} = (\pi, M, B, F, f)$ be a connected exhausted bundle with compact base and oriented noncompact connected fibers.
  Let $\Set{L_j}_{j \in \mathbb{N}}$ be a cover of $M$ by compact connected submanifolds with boundary, which have the same dimension as $M$, and whose interiors are pairwise disjoint.
  Suppose for any $j \in \mathbb{N}$, $L_j$ is a filled subspace.
  Let $\lb{L}_j = \Res{\lb{M}}_{L_j}$.
  If $\omega, \tau \in \FVform(\lb{M})$ are such that $\rint_{\lb{L}_j} \omega = \rint_{\lb{L}_j} \tau$ for each $j \in \mathbb{N}$ then there is a fiber diffeomorphism $\varphi \colon M \to M$ such that $\varphi^* \omega = \tau$.
\end{lemma}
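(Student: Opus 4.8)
The plan is to adapt the chopping-and-pasting argument of Greene--Shiohama~\cite{MR542888}, carried out fiberwise, in two stages: first I would make $\omega$ and $\tau$ agree in neighborhoods of all the shared boundaries by repeated use of Lemma~\ref{lem:lemma2}, and then intertwine them in the interiors of the pieces by Lemma~\ref{lem:lemma1}. The shared boundaries of the cover are fiber hypersurfaces, and since each $\lb{L}_j$ is compact its fiber is a compact manifold with boundary; hence each $\partial L_j$ splits into finitely many connected-fiber hypersurface components, which are exactly the kind of $\lb{N}$ to which Lemma~\ref{lem:lemma2} applies.

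\textbf{Step 1 (matching near the boundaries).} For each connected-fiber component $\lb{N}$ of a shared boundary $\partial L_j$, Lemma~\ref{lem:lemma2} furnishes a collar neighborhood $\lb{V}_{\lb{N}}$ together with a fiber diffeomorphism that carries $\omega$ to $\tau$ in a neighborhood of $N$, equals $\identity$ outside $V_N$, and --- crucially --- preserves the $\omega$-volume of each of the two sides $\lb{V}_{\lb{N}}^{\pm}$. I would shrink the collars so that the neighborhoods $V_N$ attached to distinct boundary components are pairwise disjoint (in the intended application the boundaries are connected-fiber components of level sets $f^{-1}(\alpha)$ at regular values, hence automatically disjoint hypersurfaces). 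Composing all these local diffeomorphisms gives a single fiber diffeomorphism $\varphi_1 \colon M \to M$ with $\varphi_1^* \omega = \tau$ in a neighborhood of $\bigcup_{j} \partial L_j$. Because each collar diffeomorphism preserves the $\omega$-volume of the side interior to $L_j$ and is the identity on the remainder of $L_j$, we retain
\begin{equation*}
  \int_{\lb{L}_j} \varphi_1^* \omega = \int_{\lb{L}_j} \omega = \int_{\lb{L}_j} \tau
\end{equation*}
for every $j \in \mathbb{N}$.

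\textbf{Step 2 (matching in the interiors).} Set $\omega' = \varphi_1^* \omega$. Since $\omega' = \tau$ near every boundary, the support of $\omega' - \tau$ meets each $L_j$ only in its interior $L_j^\circ$, while the fiber volumes still agree on each $\lb{L}_j$. Applying Lemma~\ref{lem:lemma1} to $\omega'$ and $\tau$ on each $\lb{L}_j$ --- legitimate since $\overline{L_j}$ is compact, $\lb{L}_j$ has connected fiber, and $\int_{\lb{L}_j} \omega' = \int_{\lb{L}_j} \tau$ --- yields a fiber diffeomorphism supported in $L_j^\circ$ that intertwines $\omega'$ and $\tau$ on $L_j$. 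As the interiors are pairwise disjoint, these assemble into a globally defined fiber diffeomorphism $\varphi_2$ with $\varphi_2^* \omega' = \tau$ on all of $M$. Then $\varphi = \varphi_1 \circ \varphi_2$ satisfies $\varphi^* \omega = \varphi_2^*(\varphi_1^* \omega) = \tau$.

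The main obstacle I anticipate is not new geometry but global bookkeeping: verifying that the (possibly infinite) composition of the local fiber diffeomorphisms from Lemmas~\ref{lem:lemma1} and~\ref{lem:lemma2} is a well-defined \emph{smooth} fiber diffeomorphism. This rests on local finiteness of the supports, so the key thing to pin down is that $\Set{L_j}_{j \in \mathbb{N}}$ is a locally finite cover --- each compact $L_j$ meeting only finitely many others, as $f$ is an exhaustion --- and that the Step~1 collars can be chosen disjoint; once each point of $M$ lies in the support of only finitely many factors, both smoothness and the fiber-preserving property of the composition are automatic. A secondary point requiring care is the decomposition of each $\partial L_j$ into its connected-fiber components so that Lemma~\ref{lem:lemma2} may be invoked, together with checking that the two-sided volume-preservation conclusion of that lemma propagates consistently across all shared boundaries so that the equality in Step~1 indeed holds for every piece.
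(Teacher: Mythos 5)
Your proposal follows essentially the same route as the paper's proof: first use Lemma~\ref{lem:lemma2} on collar neighborhoods of the shared boundary components (chosen pairwise disjoint and volume-preserving on each side) to make the forms agree near $\bigcup_j \partial L_j$ without disturbing $\int_{\lb{L}_j}$, then apply Lemma~\ref{lem:lemma1} piecewise on the interiors and compose, exactly as the paper does with $\varphi = \varphi' \circ \psi'$. The technical points you flag --- disjointness of the collars, local finiteness of the supports so the infinite composition is a genuine fiber diffeomorphism, and decomposing the boundaries into connected-fiber components --- are the same ones the paper handles, so the argument is correct as proposed.
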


\begin{proof}
  By the construction of $\Set{\lb{L}_j}_{j \in \mathbb{N}}$, any three different $L_j$'s for $j \in \mathbb{N}$ do not intersect.
  Let
  \begin{equation*}
    \mathcal{C} = \Setby{\Res{\lb{M}}_N}{N \in \Conn{L_j \cap L_k}, j, k \in \mathbb{N}, j \neq k}.
  \end{equation*}  
  Then $\mathcal{C}$ is a collection of pairwise disjoint filled subbundles of $\lb{M}$ whose underlying spaces are hypersurfaces of $M$.
  So for each $\lb{N} \in \mathcal{C}$ with underlying space $N$, let $j, k \in \mathbb{N}$ be such that $N \subset L_j \cap L_k$, by \cref{lem:fiber-collar-neighborhood}, we obtain $\varepsilon_N > 0$ and a diffeomorphism $\Phi_N \colon V_N \times (-\varepsilon_N, \varepsilon_N) \to V_N$ where $V_N$ is a neighborhood of $N$ and a filled subspace of $\lb{M}$. 
  We require $V_N^- \subset L_j$ and $V_N^+ \subset L_k$.
  Let $\lb{V}_{\lb{N}} = \Res{\lb{M}}_{V_N}$, $\lb{V}_{\lb{N}}^- = \Res{\lb{M}}_{V_N^-}$, and $\lb{V}_{\lb{N}}^+ = \Res{\lb{M}}_{V_N^+}$.
  Now apply \cref{lem:lemma2} to $\Rls \lb{V}_{\lb{N}}$ in order to obtain a fiber diffeomorphism $\varphi_N\colon M \to M$ such that $\varphi_N = \identity$ in a neighborhood of $M \setminus V_N$, $\varphi_N^*\omega = \tau$ in a neighborhood of $N$, and $\rint_{\lb{V}_{\lb{N}}^+} \varphi_N^* \omega = \rint_{\lb{V}_{\lb{N}}^+} \omega$ as well as $\rint_{\lb{V}_{\lb{N}}^-} \varphi_N^* \omega = \rint_{\lb{V}_{\lb{N}}^-} \omega$ (note that a differmorphism preserving the released bundle map also preserves the original bundle map). 
  Therefore $\rint_{\lb{L}_j} \varphi_N^* \omega = \rint_{\lb{L}_j} \omega, \rint_{\lb{L}_k} \varphi_N^* \omega = \rint_{\lb{L}_k} \omega$.
  Note that $V_N^- \subset L_j$, so the base of $\Rls \lb{V}_{\lb{N}}^-$ covers that of $\Rls \lb{L}_j$.

  If necessary, choose $\varepsilon_N$ small such that the family $\Set{\overline{V_N}}_{N \in \mathcal{C}}$ is mutually disjoint.
  Since replacing $\omega$ by $\varphi_N^*\omega$ each time does not change the released fiber volume of $\lb{L}_j$ for any $j \in \mathbb{N}$, we compose these $\varphi_N$ for $N = \undsp(\lb{N})$, for $\lb{N} \in \mathcal{C}$, as they are the identity away from disjoint open sets, to obtain a fiber diffeomorphism $\varphi' \colon M \to M$ such that $\omega' = \varphi'^* \omega$ is equal to $\tau$ on some neighborhood of $\bigcup_{N \in \mathcal{C}} N$ and $\rint_{\lb{L}_j} \omega' = \rint_{\lb{L}_j} \omega = \rint_{\lb{L}_j} \tau$ for each $j \in \mathbb{N}$.
  Applying \cref{lem:lemma1} to each $\Rls \lb{L}_j$ for $j \in \mathbb{N}$ we get a fiber diffeomorphism $\psi_j \colon M \to M$ such that $\tau = \psi_j^* \omega'$ in $L_j$ and $\psi_j = \identity$ in a neighborhood of $M \setminus L_j$.
  Replacing $\omega'$ by $\psi_j^* \omega'$ each time and composing $\Set{\psi_j}_{j \in \mathbb{N}}$ we obtain a fiber diffeomorphism $\psi' \colon M \to M$ such that $\tau = \psi'^* \omega'$.
  Then $\varphi = \varphi' \circ \psi'$ satisfies the required properties.
\end{proof}

\section{Proof of the main result} \label{sec:filtration-theorem}

The analogue of the following lemma for the case of smooth families appeared in \cite[Lemma 4.1]{PeTa18}, the proof strategy is analogous.

For any tree $\mathcal{T}$ of height $\bfomega$, for any $\lb{X} \in \tlevel(\ell)$, $\ell \in \N \cup \Set{0}$, let
\begin{equation*} 
  \Tse_{\mathcal{T}} \lb{X} = \lb{X}_{[\alpha_{\ell+1}]}, \qquad \Shta_{\mathcal{T}} \lb{X} = \lb{X}_{[\alpha_{\ell+2}]}.
\end{equation*}
Then we have
\begin{equation*}
  \undsp\Pa{\Tse_{\mathcal{T}} \lb{X}} = \undsp(\lb{X}) \setminus \coprod_{\lb{Y} \in \tsuc(\lb{X})} \undsp(\lb{Y}), \quad \undsp\Pa{\Shta_{\mathcal{T}} \lb{X}} = \undsp(\lb{X}) \setminus \coprod_{\lb{Z} \in \tssuc(\lb{X})} \undsp(\lb{Z}).
\end{equation*}

\begin{lemma} \label{lem:filtration}
  Let $\lb{M} = (\pi, M, B, F, f)$ be a connected exhausted bundle with compact base and oriented noncompact connected fiber.
  Let $\omega, \tau \in \FVform(\lb{M})$ be commensurable and suppose that they have equal fiber integral.
  Then there is a tree $\Pa{\mathcal{T}, {\supsetneq}}$ of connected filled subbundles of $\lb{M}$ and $\Set{\omega_n}_{n \in \mathbb{N} \cup \Set{0}}, \Set{\tau_n}_{n \in \mathbb{N} \cup \Set{0}} \subset \FVform(\lb{M})$ such that $\omega_0 = \omega, \tau_0 = \tau$ and for any $n \in \mathbb{N}$, we have that
  \begin{equation} \label{eq:support-in-level}
    \support(\omega_n - \omega_{n-1}) \cup \support(\tau_n - \tau_{n-1}) \subset \bigcup_{\lb{C} \in \tlevel(2n-2)} \Pa{\undsp(\Shta_\mathcal{T} \lb{C})}^\circ,
  \end{equation}
  as well as that for each $\lb{A} \in \tlevel(2n-3)$ with $n > 1$, $\lb{C} \in \tlevel(2n-2)$, $\lb{E} \in \tlevel(2n-1)$,
  \begin{align}
    \rint_{\Tse_\mathcal{T} \lb{M}} \omega_1 &= \rint_{\Tse_\mathcal{T} \lb{M}} \tau_1, &\rint_{\Shta_\mathcal{T} \lb{A}} \omega_n &= \rint_{\Shta_\mathcal{T} \lb{A}} \tau_n \text{~for $n > 1$}; \label{eq:volume-in-Shta-A} \\
    \rint_{\Shta_\mathcal{T} \lb{C}} \omega_n &= \rint_{\Shta_\mathcal{T} \lb{C}} \omega_{n-1},  &\rint_{\Shta_\mathcal{T} \lb{C}} \tau_n &= \rint_{\Shta_\mathcal{T} \lb{C}} \tau_{n-1}; \label{eq:volume-in-Shta-C} \\
    \rint_{\lb{E}} \omega_n &= \rint_{\lb{E}} \tau_n. & & \label{eq:volume-in-E}
  \end{align}
\end{lemma}

\inputfigure{fiber-bundle}{preparation}{
$\lb{M} = (\pi, M, B, F, f)$ in \cref{lem:filtration}.
}

\begin{proof}
  Our goal is to find $\alpha_0 = -\infty$ and $\Set{\alpha_\ell}_{\ell \in \N} \subset \Regular(f) \cap f(M)$ such that $\mathcal{T}$ is constructed by \cref{lem:bundle-slicing-tree}, see \cref{fig:preparation}.
  Note that, if we know $\Set{\alpha_\ell}_{0 \leq \ell \leq m}$ for some $m \in \N \cup \Set{0}$ for the sequence $\Set{\alpha_\ell}_{\ell \in \N \cup \Set{0}}$ defining $\mathcal{T}$, then we say $\mathcal{T}$ is constructed up to the $m$-th level, so we know $\tlevel(\ell)$ of $\mathcal{T}$ for any $\ell$ with $0 \leq \ell \leq m$.
  We proceed by induction on $n \in \mathbb{N} \cup \Set{0}$ to find $\alpha_{2n-1}$, $\alpha_{2n}$ and $\omega_n, \tau_n \in \FVform(\lb{M})$ such that $\rint_{\lb{E}} \omega_n = \rint_{\lb{E}} \tau_n$ for any $\lb{E} \in \tlevel(2n-1)$ ($\lb{E} \in \tlevel(0)$ if $n = 0$).
  For any $\lb{X}, \lb{Y} \in \mathcal{T}$ with $\lb{X} \supsetneq \lb{Y}$, let $\bfiota_{\lb{X}}^{\lb{Y}} \colon \lb{Y} \hookrightarrow \lb{X}$ be the embedding and let 
  \begin{equation*}
   \Rls \bfiota_{\lb{X}}^{\lb{Y}} = (\iota_{\lb{X}}^{\lb{Y}}, \kappa_{\lb{X}}^{\lb{Y}}).
  \end{equation*} 
  By \cref{lem:covering-kappa}, $\kappa_{\lb{X}}^{\lb{Y}}$ is a covering map, whose number of sheets is denoted by $\#\kappa_{\lb{X}}^{\lb{Y}}$.

  \emph{Case $0$.}
  Set $\alpha_0 = -\infty$, then $\lb{M}_{[\alpha_0]} = \emptyset$, and $\tlevel(0) = \Set{\lb{M}}$.
  Since $\omega$ and $\tau$ has equal fiber integral and $\lb{M}$ has connected fiber, we have
  $\rint_{\lb{M}} \omega_0 = \rint_{\lb{M}} \tau_0$.

  \medskip
  
  \emph{Case $(n-1)$ for $n \in \mathbb{N}$.}
  By induction we assume that for any $\lb{A} \in \tlevel(2n-3)$ ($\lb{A} \in \tlevel(0)$ when $n = 1$) we have:
  \begin{equation} \label{eq:equal-volume-A}
    \rint_{\lb{A}} \omega_{n-1} = \rint_{\lb{A}} \tau_{n-1}.
  \end{equation}
  
  \medskip

  \emph{Case $n$ for $n \in \mathbb{N}$.}
  Take $\alpha_{2n-1} \in \Regular(f)$ such that $\lb{C}_{[\alpha_{2n-1}]}$ is a saturated slice of $\lb{C}$ for each $\lb{C} \in \tlevel(2n-2)$ (for the concept of saturated slices and the existence of $\alpha_{2n-1}$, see \cref{lem:saturating-threshold}).
  Then $\mathcal{T}$ is constructed up to the $(2n-1)$-th level.
  Let $\lb{A} \in \tlevel(2n-3)$ (if $n = 1$ let $\lb{A} = \lb{M}$ and replace $\tssuc(\lb{A})$ by $\tsuc(\lb{M})$, $\Shta_\mathcal{T} \lb{A}$ by $\Tse_\mathcal{T} \lb{M}$ throughout this paragraph).
  The base of $\Rls \lb{A}$ is $B_A$.
  Let $\tssuc_0(\lb{A})$ (resp. $\tssuc_1(\lb{A})$) be the subcollection of elements in $\tssuc(\lb{A})$ with finite (resp. infinite) volume.
  For any $\lb{E} \in \tssuc(\lb{A})$, we define $\delta_{\lb{E}} \in \smth(B_E; \mathbb{R})$ as follows: if $\lb{E}$ has finite volume, let $\delta_{\lb{E}} = \rint_{\lb{E}} \omega_{n-1} - \rint_{\lb{E}} \tau_{n-1}$; if $\lb{E}$ has infinite volume, let
  \begin{equation*}
    \delta_{\lb{E}} = \dfrac{1}{\sum\limits_{\lb{G} \in \tssuc_1(\lb{A})} \# \kappa^{\lb{G}}_{\lb{A}}} (\kappa^{\lb{E}}_{\lb{A}})^* \Pa{ \rint_{\Shta_\mathcal{T} \lb{A}} \tau_{n-1} - \rint_{\Shta_\mathcal{T} \lb{A}} \omega_{n-1} - \sum_{\lb{G} \in \tssuc_0(\lb{A})} (\kappa^{\lb{G}}_{\lb{A}})_* \delta_{\lb{G}}}.
  \end{equation*}
  Then combining equations \cref{eq:covering-space-equation} and \cref{eq:equal-volume-A} we obtain
  \begin{equation*}
    \sum_{\lb{E} \in \tssuc(\lb{A})} (\kappa^{\lb{E}}_{\lb{A}})_* \delta_{\lb{E}} = \rint_{\Shta_\mathcal{T} \lb{A}} \tau_{n-1} - \rint_{\Shta_\mathcal{T} \lb{A}} \omega_{n-1}.
  \end{equation*}

  For every $\lb{C} \in \tsuc(\lb{A})$, let $B_C$ be the base of $\Rls \lb{C}$, and let $u_{\lb{C}} \in \smth(B_C; \mathbb{R})$ be such that
  \begin{align*}
    \max\Pa{- \rint_{\Tse_\mathcal{T} \lb{C}} \omega_{n-1}, - \rint_{\Tse_\mathcal{T} \lb{C}} \tau_{n-1} + \sum_{\lb{E} \in \tsuc(\lb{C})} (\kappa^{\lb{E}}_{\lb{C}})_* \delta_{\lb{E}}} < u_{\lb{C}} < \rint_{\lb{C}} \omega_{n-1} - \rint_{\Tse_\mathcal{T} \lb{C}} \omega_{n-1}. 
  \end{align*}
  If $\lb{C}$ has finite volume then
  \begin{align*}
    &\phantom{{}=} \Pa{\rint_{\lb{C}} \omega_{n-1} - \rint_{\Tse_\mathcal{T} \lb{C}} \omega_{n-1}} - \Pa{ -\rint_{\Tse_\mathcal{T} \lb{C}} \tau_{n-1} + \sum_{\lb{E} \in \tsuc(\lb{C})} (\kappa^{\lb{E}}_{\lb{C}})_* \delta_{\lb{E}}} \\
    &= \rint_{\lb{C}} \omega_{n-1} + \Pa{\rint_{\Tse_\mathcal{T} \lb{C}} \tau_{n-1} - \rint_{\Tse_\mathcal{T} \lb{C}} \omega_{n-1}} + \sum_{\lb{E} \in \tsuc(\lb{C})} (\kappa^{\lb{E}}_{\lb{C}})_* \Pa{\rint_{\lb{E}} \tau_{n-1} - \rint_{\lb{E}}  \omega_{n-1}} \\
    &= \rint_{\lb{C}} \tau_{n-1} > 0,
  \end{align*}
  so such $u_{\lb{C}}$ exists.
  Since 
  \begin{equation*}
    u_{\lb{C}} < \sum_{\lb{E} \in \tsuc(\lb{C})} (\kappa^{\lb{E}}_{\lb{C}})_* \rint_{\lb{E}} \omega_{n-1} = \rint_{\lb{C}} \omega_{n-1} - \rint_{\Tse_\mathcal{T} \lb{C}} \omega_{n-1},
  \end{equation*}
  by \cref{lem:approximation-lemma} applied to the covering map
  \begin{equation*}
    \coprod_{\lb{E} \in \tsuc(\lb{C})} B_E \to B_C,
  \end{equation*}
  we may choose $v_{\lb{E}} \in \smth(B_E; \mathbb{R})$ (where $B_E$ is the base of $\Rls \lb{E}$) such that $v_{\lb{E}} < \rint_{\lb{E}} \omega_{n-1}$ and $\sum_{\lb{E} \in \tsuc(\lb{C})} (\kappa^{\lb{E}}_{\lb{C}})_* v_{\lb{E}} = u_{\lb{C}}$.

  For any $\lb{E} \in \tsuc(\lb{C})$, if $\lb{E}$ has infinite volume, take $\beta_{\lb{E}} \in \Regular(f)$ such that $\lb{E}_{[\beta_{\lb{E}}]}$ is a saturated slice (for the concept of saturated slices and the existence of $\alpha_{2n-1}$, see \cref{lem:saturating-threshold}).
  Otherwise, the function 
  \begin{equation} \label{eq:lambdaps}
    (\cdot, \beta) \mapsto \min\Pa{\int_{\Res{(\Rls \lb{E})}_{E_{[\beta]}}} \omega_{n-1}, \int_{\Res{(\Rls \lb{E})}_{E_{[\beta]}}} \tau_{n-1} + \delta_{\lb{E}}} - v_{\lb{E}}
  \end{equation}
  $B_E \times \mathbb{R}$ is continuous in the first variable, is increasing in $\beta$, and converges to $\rint_{\lb{E}} \omega_{n-1} - v_{\lb{E}} > 0$ as $\beta \to +\infty$ pointwise.
  Note $\Rls \lb{E}$ is not a subbundle of $\lb{M}$, so we cannot slice it by $\beta$.
  Since $B_E$ is compact there is $\beta_{\lb{E}} > \alpha_{2n-1}$ such that \cref{eq:lambdaps} is positive when $\beta = \beta_{\lb{E}}$.
  Let $\alpha_{2n} = \max_{\lb{E} \in \tlevel(2n-1)} \beta_E$, then $\mathcal{T}$ is constructed up to the $2n$-th level.
  So $\Tse_\mathcal{T} \lb{E} = \lb{E}_{[\alpha_{2n}]}$, then we have $v_{\lb{E}} < \rint_{\Tse_\mathcal{T} \lb{E}} \omega_{n-1}$, and $v_{\lb{E}} - \delta_{\lb{E}} < \rint_{\Tse_\mathcal{T} \lb{E}} \tau_{n-1}$.  Since all the right hand sides of these expressions are positive, by \cref{lem:volume-lemma}, there are $\omega_n, \tau_n \in \FVform(\lb{M})$ such that
  \begin{align*}
    \rint_{\Tse_\mathcal{T} \lb{C}} \omega_n &= \rint_{\Tse_\mathcal{T} \lb{C}} \omega_{n-1} + u_{\lb{C}}, &\rint_{\Tse_\mathcal{T} \lb{C}} \tau_n &= \rint_{\Tse_\mathcal{T} \lb{C}} \tau_{n-1} + u_{\lb{C}} - \sum_{\lb{E} \in \tsuc(\lb{C})} (\kappa^{\lb{E}}_{\lb{C}})_* \delta_{\lb{E}}, \\
    \rint_{\Tse_\mathcal{T} \lb{E}} \omega_n &= \rint_{\Tse_\mathcal{T} \lb{E}} \omega_{n-1} - v_{\lb{E}} , &\rint_{\Tse_\mathcal{T} \lb{E}} \tau_n &= \rint_{\Tse_\mathcal{T} \lb{E}} \omega_{n-1} - (v_{\lb{E}} - \delta_{\lb{E}}),
  \end{align*}
  and
  \begin{multline*}
    \support(\omega_n - \omega_{n-1}) \cup \support(\tau_n - \tau_{n-1}) \\
    \subset (M_{[\alpha_{2n}]})^\circ \setminus M_{[\alpha_{2n-2}]} = \bigcup_{\lb{C} \in \tlevel(2n-2)} \Pa{\undsp(\Shta_\mathcal{T} \lb{C})}^\circ.
  \end{multline*}

  Then we have
  \begin{align*}
    \rint_{\Shta_\mathcal{T} \lb{A}} \omega_n &= \rint_{\Shta_\mathcal{T} \lb{A}} \omega_{n-1} + \sum_{\lb{C} \in \tsuc(\lb{A})} (\kappa^{\lb{C}}_{\lb{A}})_* u_{\lb{C}} \\
     &= \rint_{\Shta_\mathcal{T} \lb{A}} \tau_{n-1} - \sum_{\lb{E} \in \tssuc(\lb{A})} (\kappa^{\lb{C}}_{\lb{A}})_* \Pa{(\kappa^{\lb{E}}_{\lb{C}})_* \delta_{\lb{E}} - u_{\lb{C}}} = \rint_{\Shta_\mathcal{T} \lb{A}} \tau_n,
  \end{align*}
  and
  \begin{align*}
    \rint_{\Shta_\mathcal{T} \lb{C}} \omega_n &= \rint_{\Tse_\mathcal{T} \lb{C}} \omega_n + \sum_{\lb{E} \in \tsuc(\lb{C})} (\kappa^{\lb{E}}_{\lb{C}})_* \rint_{\Tse_\mathcal{T} \lb{E}} \omega_n = \rint_{\Shta_\mathcal{T} \lb{C}} \omega_{n-1}, \\
    \rint_{\Shta_\mathcal{T} \lb{C}} \tau_n &= \rint_{\Tse_\mathcal{T} \lb{C}} \tau_n + \sum_{\lb{E} \in \tsuc(\lb{C})} (\kappa^{\lb{E}}_{\lb{C}})_* \rint_{\Tse_\mathcal{T} \lb{E}} \tau_n = \rint_{\Shta_\mathcal{T} \lb{C}} \tau_{n-1},
  \end{align*}
  and
  \begin{equation*}
  \begin{split}
    \rint_{\lb{E}} \omega_n &= \rint_{\Tse_\mathcal{T} \lb{E}} \omega_n + \rint_{\lb{E}} \omega_{n-1} - \rint_{\Tse_\mathcal{T} \lb{E}} \omega_{n-1} \\
    &= \rint_{\lb{E}} \omega_{n-1} - v_{\lb{E}} = \rint_{\lb{E}} \tau_{n-1} - (v_{\lb{E}} - \delta_{\lb{E}}) = \rint_{\lb{E}} \tau_n.
  \end{split}
  \end{equation*}
\end{proof}

Now we can apply \cref{lem:filtration} to $\lb{M}$ and $\omega, \tau$, in which way we obtain the tree $\mathcal{T}$ of filled subbundles of $\lb{M}$ such that \cref{eq:support-in-level,eq:volume-in-Shta-A,eq:volume-in-Shta-C,eq:volume-in-E} hold.

For $n \in \mathbb{N}$ and $\lb{C} \in \tlevel(2n-2)$, applying \cref{lem:lemma1} to $\Pa{\Shta_\mathcal{T} \lb{C}}^\circ$, there are fiber diffeomorphisms $\varphi_n, \psi_n \colon M \to M$ such that $\varphi_n ^* \omega_{n-1} = \omega_n$, $\psi_n ^* \tau_{n-1} = \tau_n$, and $\varphi_n = \psi_n = \identity$ outside of $(M_{[\alpha_{2n}]})^\circ \setminus M_{[\alpha_{2n-2}]}$.
Let
\begin{equation} \label{eq:omega-infinity}
\begin{aligned}
  \omega_\infty &= \lim_{n \to \infty} \omega_n, & \tau_\infty &= \lim_{n \to \infty} \tau_n, \\
  \varphi_\infty &= \varphi_1 \circ \varphi_2 \circ \cdots, & \psi_\infty &= \psi_1 \circ \psi_2 \circ \cdots.
\end{aligned}
\end{equation}
Since the interiors of $\undsp(\Shta_\mathcal{T} \lb{C})$, $\lb{C} \in \mathcal{T}$ with even depths are mutually disjoint, the pointwise limits in \cref{eq:omega-infinity} will be stable at a finite $n$, so $\omega_\infty, \tau_\infty \in \FVform(\lb{M})$, $\varphi_\infty, \psi_\infty \colon M \to M$ must be fiber diffeomorphisms, 
\begin{equation*}
  \rint_{\Tse_\mathcal{T} \lb{M}} \omega_\infty = \rint_{\Tse_\mathcal{T} \lb{M}} \tau_\infty, \qquad \rint_{\Shta_\mathcal{T} \lb{A}} \omega_\infty = \rint_{\Shta_\mathcal{T} \lb{A}} \tau_\infty
\end{equation*}
for each $\lb{A} \in \mathcal{T}$ with odd depth, $\varphi_\infty^* \omega = \omega_\infty$, and $\psi_\infty^* \tau = \tau_\infty$.

Let $\Set{\lb{L}_j}_{j \in \mathbb{N}}$ be the set of $\Tse_\mathcal{T} \lb{M}$ and the closures of $\Shta_\mathcal{T} \lb{A}$ for $\lb{A} \in \mathcal{T}$ with even depths.
By \cref{lem:lemma3}, there is a fiber diffeomorphism $\varphi' \colon M \to M$ such that $\varphi'^* \omega_\infty = \tau_\infty$.

Finally, $\varphi = \varphi_\infty \circ \varphi' \circ \psi_\infty^{-1} \colon M \to M$, which concludes the proof.

\appendix

\section{Technical tools} \label{app:technical-tools}

\subsection{Trees} \label{sapp:trees}

A \emph{tree} is a strictly partially ordered set $(\mathcal{T}, \prec)$ with the property that for each $x \in \mathcal{T}$, the set $\tPre(x) = \Setby{y \in \mathcal{T}}{y \prec x}$ of all predecessors of $x$ is well ordered by $\prec$.
We write $\mathcal{T}$ for $(\mathcal{T}, \prec)$ when there is no ambiguity.
Let $\tRoot(\mathcal{T}) = \Setby{x \in T}{\forall y \in T, y \not\prec x} \neq \emptyset$ be the set of roots of $\mathcal{T}$.
If $\tRoot(\mathcal{T})$ is a singleton we call $\mathcal{T}$ \emph{rooted}.

Let $\tSuc(x) = \Setby{y \in \mathcal{T}}{y \succ x}$ be the set of all successors of $x$, then $(\tSuc(x), \prec)$ is a tree.
Let $\tsuc(x) = \tRoot(\tSuc(x))$ be the set of \emph{children} of $x$.
If for any $x \in \mathcal{T}$, $\tsuc(x)$ is finite, we call $\mathcal{T}$ \emph{locally finite}.
Let $\tssuc(x) = \bigcup_{y \in \tsuc(x)} \tsuc(y)$ be the set of \emph{grandchildren} of $x$.
Let $\tLeaf(\mathcal{T}) = \Setby{x \in T}{\forall y \in T, x \not\prec y}$ be the set of leaves of $\mathcal{T}$.
If $\tLeaf(\mathcal{T}) = \emptyset$ we call $\mathcal{T}$ \emph{leafless}.

The \emph{depth} of $x$ is the ordinal of $\tPre(x)$, which we denote by $\tdepth(x)$.
Let $\theight(\mathcal{T}) = \sup\Setby{\tdepth(x) + 1}{x \in \mathcal{T}}$ be the \emph{height} of $\mathcal{T}$.
For any ordinal $\ell < \theight(\mathcal{T})$, let $\tlevel(\ell) = \Setby{x \in \mathcal{T}}{\tdepth(x) = \ell}$ be the $\ell$-th \emph{level} of $\mathcal{T}$. 

\subsection{A Hodge theory lemma} \label{sapp:hodge-theory}

The following was proved in \cite{PeTa18}.

\begin{theorem} [{\cite[Theorem 3.2]{PeTa18}}] \label{thm:prelimitive-compact-support}
  Let $Z$ be an open subset of $F$ such that $\overline{Z}$ is a submanifold of $F$ with boundary $\partial Z$.
  Then for any $k \in \mathbb{N}$ with $1 \leq k \leq \dim F$ there is an operator  $I_Z^k$ preserving smooth families of $k$-forms
  \begin{equation*}
    \Setby{\xi \in \Omega^k_\cspt(F)}{\support \xi \subset Z, \Res{\xi}_Z \in \der \Omega^{k-1}_\cspt(Z)} \to \Setby{\eta \in \Omega^{k-1}_\cspt(F)}{\support \eta \subset \overline{Z}}
  \end{equation*}
  satisfying $d \circ I_Z^q = \identity$.
\end{theorem}

\begin{proof}
  For the sake of being self-contained, we repeat the proof here. 
  By \cite{MR1893604} we have a weighted Hodge-Laplacian $\Delta_\mu \colon \Omega^k_\cspt(Z) \to \Omega^k_\cspt(Z)$ on $Z$ which is endowed with a specific metric $g$ and measure $\mu$.
  Its Green operator $G_\mu \colon \Omega^k_\cspt(Z) \to \Omega^k(Z)$ and  weighted codifferential $\delta_\mu \colon \Omega^k_\cspt(Z) \to \Omega^{k-1}_\cspt(Z)$ satisfy $\der \circ \delta_\mu \circ G_\mu \circ \der = \der$.
  Any $\eta \in G_\mu(\Omega^k_\cspt(Z))$ has an extension $\tilde{\eta} \in \Omega^k_\cspt(F)$,such that $\support \tilde{\eta} \subset \overline{Z}$ and $\Res{\tilde{\eta}}_Z = \eta$.
  For $\xi \in \Omega^k_\cspt(F)$ supported in $Z$, if $\Res{\xi}_Z \in \der \Omega^{k-1}_\cspt(Z)$ let $I_Z^k(\xi)$ be the extension of $(\delta_\mu \circ G_\mu) (\Res{\xi}_Z)$ to $\Omega^{k-1}_\cspt(F)$.
  Then $\der \circ I_Z^k = \identity$.
  The operator $I_Z^k$ preserves smooth families, this can be seen as follows. 
  Let $U$ be an open subset of some Euclidean space.
  First, the $p$-derivative of a smooth family $\xi_p$, $p \in U$ of compactly supported forms is compactly supported.
  Since $G_\mu$ is an integral operator with a singular kernel one can pass the $p$-derivative through $G_\mu$, so $\partial_p G_\mu \xi_p$ exists and is a smooth form, for each $p \in U$.
  By an analogous proof for higher order derivatives, $G_\mu \xi_p$, with $p \in U$, is a smooth family.
  The map $\delta_\mu$ preserves smooth families since it is a differential operator.
\end{proof}

\subsection{Three auxiliary technical lemmas} \label{sapp:three-technical}

Now we prove an auxiliary lemma which will ensure the smooth dependence of volumes on parameters after distributed into many connected components (for the proof of the main technical tool below \cref{lem:filtration}).
Let $\kappa \colon B' \to B$ be a covering map.
We define the \emph{pullback} $\kappa^*$ and the \emph{pushforward} $\kappa_*$ of functions as follows 
\begin{align*}
  \kappa^* \colon \cont(B; \mathbb{R}) &\to \cont(B'; \mathbb{R}), &\kappa_* \colon \cont(B'; \mathbb{R}) &\to \cont(B; \mathbb{R}); \\
  (\kappa^*u)(p') &= u(\kappa(p')), &(\kappa_*u)(p) &= \sum_{p' \in \kappa^{-1}(p)} u(p').
\end{align*}
If $B$ is connected, recall that $\#\kappa \in \N$ is the number of sheets of $\kappa$.
Then for any $u \in \cont(B; \mathbb{R})$,
\begin{equation} \label{eq:covering-space-equation}
  (\kappa_* \kappa^* u)(p) = \sum_{p' \in \kappa^{-1}(p)} u(\kappa(p')) = \#\kappa \cdot u(p).
\end{equation}

The following generalizes \cite[Lemma 3.7]{PeTa18}.

\begin{lemma} \label{lem:approximation-lemma}
  Let $\kappa \colon B' \to B$ be a covering map with $B'$ compact (so $B$ is compact).
  Let $a \in \cont(B; \mathbb{R})$, $u \in \smth(B; \mathbb{R})$ such that $u < a$. 
  Then for any $a' \in \cont(B'; \mathbb{R})$ with $\kappa_* a' = a$, there is $u' \in \smth(B'; \mathbb{R})$ such that $u' < a'$ and $\kappa_* u' = u$.
\end{lemma}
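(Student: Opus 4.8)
The plan is to reduce the statement to an additive correction problem and then to absorb the mismatch between the merely continuous datum $a'$ and the required smoothness of $u'$ into a term that is invisible to the pushforward. Assume first that $B$ is connected (otherwise run the argument on each connected component of $B$, on which $\#\kappa$ is constant) and write $n = \#\kappa$. Since $u < a$ and $B$ is compact, the continuous function $t = a - u$ satisfies $\min_B t = 2\delta$ for some $\delta > 0$. Set $g' = a' - \tfrac1n \kappa^* u \in \cont(B'; \mathbb{R})$; using $\kappa_* a' = a$ together with $\kappa_* \kappa^* u = \#\kappa \cdot u$ from \eqref{eq:covering-space-equation}, one computes $\kappa_* g' = a - u = t$. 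Thus the problem becomes: find a smooth $v'$ on $B'$ with $\kappa_* v' = 0$ and $v' < g'$, for then $u' = \tfrac1n \kappa^* u + v'$ will be smooth, will satisfy $\kappa_* u' = u$, and will obey $u' < \tfrac1n\kappa^* u + g' = a'$.

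Next I would approximate $g'$ from below by a smooth function while keeping control of its pushforward. Since $B'$ is compact, smooth functions are dense in $\cont(B';\mathbb{R})$ in the supremum norm (partition of unity and local mollification), so choose $h' \in \smth(B';\mathbb{R})$ with $\sup_{B'} \abs{h' - g'} < \tfrac{\delta}{2n}$ and put $\tilde g' = h' - \tfrac{\delta}{2n}$. Then $0 < g' - \tilde g' < \tfrac{\delta}{n}$ pointwise, so in particular $\tilde g' < g'$, and $\tilde t := \kappa_* \tilde g'$ is smooth with $t - \tilde t = \kappa_*(g' - \tilde g') \in (0, \delta)$; combined with $t \geq 2\delta$ this gives $\tilde t > \delta > 0$ everywhere on $B$.

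With these choices I would set
\begin{equation*}
  v' = \tilde g' - \tfrac1n \kappa^* \tilde t, \qquad\text{equivalently}\qquad u' = \tfrac1n \kappa^*(u - \tilde t) + \tilde g'.
\end{equation*}
This $v'$ is smooth, and $\kappa_* v' = \tilde t - \tfrac1n \kappa_*\kappa^* \tilde t = \tilde t - \tilde t = 0$ again by \eqref{eq:covering-space-equation}. Moreover $v' = \tilde g' - \tfrac1n\kappa^*\tilde t < \tilde g' < g'$ because $\tilde t > 0$, which is exactly the inequality needed. Translating back, $u' = \tfrac1n\kappa^* u + v'$ is smooth, $\kappa_* u' = u + 0 = u$, and $u' - a' = (\tilde g' - g') - \tfrac1n \kappa^* \tilde t < 0$ since both summands are negative, so $u' < a'$, as required.

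The step I expect to be the crux is the interplay between the merely continuous target $a'$ and the \emph{exact} pushforward constraint $\kappa_* u' = u$ with $u$ smooth: naively replacing $a'$ (or $g'$) by a smooth approximant destroys the identity $\kappa_* u' = u$. The device that resolves this is the correction term $-\tfrac1n \kappa^* \tilde t$, which is automatically smooth, cancels under $\kappa_*$ by \eqref{eq:covering-space-equation}, and---being strictly positive thanks to the uniform gap $t \geq 2\delta$ supplied by compactness---simultaneously restores the exact pushforward value $u$ and keeps $u'$ strictly below $a'$. Everything else is routine, relying only on compactness of $B'$, density of smooth functions in the supremum norm, and the elementary facts that $\kappa_*$ carries smooth functions to smooth functions (a finite sum of smooth functions over local sections on evenly covered charts) and that $\kappa_*\kappa^* = \#\kappa$.
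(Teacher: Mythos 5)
Your proof is correct, and every step checks out: $\kappa_* g' = a - u \geq 2\delta$, the smoothed $\tilde g'$ satisfies $0 < g' - \tilde g' < \delta/n$ so that $\tilde t = \kappa_*\tilde g' > \delta > 0$, and the projection $v' = \tilde g' - \tfrac1n\kappa^*\tilde t$ lands in the kernel of $\kappa_*$ by \eqref{eq:covering-space-equation} while strictly decreasing $\tilde g'$, which gives $v' < g'$ for free. The overall strategy coincides with the paper's: reduce to a zero-pushforward problem, invoke Whitney approximation on the compact $B'$, and correct with the operator $\identity - \tfrac{1}{\#\kappa}\kappa^*\kappa_*$, using the uniform gap supplied by compactness of $B$. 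Where you diverge is in the middle step and in the order of operations: the paper first builds an explicit \emph{continuous} kernel element $w'$ below $h' = a' - \varepsilon$ via a formula involving the positive and negative parts of $h'$ (namely $w' = \tfrac{(h')^+}{\kappa^*\kappa_*(h')^+}\,\kappa^*\kappa_*(h')^- - (h')^-$), then smooths $w'$ and re-projects, recovering the inequality $u' < a'$ from the quantitative estimate $\lvert u' - w'\rvert < \varepsilon$ against the gap $a' - h' = \varepsilon$. You instead smooth first (from below, by shifting the approximant down by $\delta/2n$) and then observe that projecting a function whose pushforward is strictly positive can only decrease it, so the required inequality is structural rather than the outcome of an error estimate. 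Your route avoids the positive/negative-part construction entirely and is somewhat cleaner; the paper's route makes the existence of a continuous solution explicit before any smoothing enters. Both are valid, and both rest on the same two pillars: density of smooth functions in the sup norm on the compact $B'$, and the identity $\kappa_*\kappa^* = \#\kappa$ on each connected component of $B$.
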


\begin{proof}
  Without loss of generality we assume $B$ is connected and $u = 0$ otherwise we can deal with each connected component of $B$ one by one and replace $a'$ by $a' - u/\#\kappa$, $u'$ by $u' - u/\#\kappa$.

  Choose $\varepsilon > 0$ such that $\#\kappa \cdot \varepsilon < \min a$.
  Define $h' = a' - \varepsilon$, then $\kappa_* h' = a - \#\kappa \cdot \varepsilon > 0$.
  So $\kappa_* (h')^+ > \kappa_* (h')^- \geq 0$.
  Here $(h')^+(p) = \min\Set{h'(p), 0}$ and $(h')^-(p) = \min\Set{-h'(p), 0}$ denote the positive and negative parts of $h'$, respectively.
  Since $h'$ is bounded from below we set $c = \max \kappa_* (h')^- > 0$.
  Define $w' = \frac{(h')^+}{\kappa^* \kappa_* (h')^+} \kappa^* \kappa_* (h')^- -(h')^-$, then $\kappa_*(w') = 0$.
  Moreover,
  \begin{equation*}
    h' - w' = (h')^+ - \frac{\kappa^* \kappa_* (h')^-}{\kappa^* \kappa_*(h')^+} (h')^+ \geq 0.
  \end{equation*}

  By Whitney Approximation Theorem there is $v' \in \smth(B'; \mathbb{R})$ such that $\abs{v' - w'} < \varepsilon / 2$.
  Then let $u' = v' - \frac{1}{\# \kappa} \kappa^* \kappa_*(v') \in \smth(B'; \mathbb{R})$.
  So $\abs{u' - w'} < \varepsilon$, and $\kappa_* u' = 0$ by \cref{eq:covering-space-equation}, hence $a' - u' > h' - w' \geq 0$ is as required.
\end{proof}

Finally, in the paper we need the following elementary result:

\begin{lemma}[{\cite[Lemma~2.1]{PeTa18}}] \label{lem:connecting}
  Let $X$ be a locally connected locally compact Hausdorff space. Let $\mathcal{K}(X)$ be the collection of compact subsets of $X$.
  Let $K \in \mathcal{K}(X)$ and let $A, A' \subset X$ be connected and precompact.
  If $A, A'$ lies in the same connected component $C$ of $X$ then there is $L \in \mathcal{K}(X)$ such that they lie in the same connected component of $L \cap C$.
\end{lemma}

Then \cref{lem:saturating-threshold} follows from \cref{lem:connecting}.

\begin{proof} [Proof of \cref{lem:saturating-threshold}]
  Let $P$ be the fiber of $\lb{A}$, so $\lb{A} = (\Res{\pi}_A, A, B, P, \Res{f}_A)$.
  Let $B_A, P_A$ be manifolds such that $\Rls \lb{A} = (\Rls(\Res{\pi}_A), A, B_A, P_A, f)$.
  Fix $\alpha \in \Regular(f) \cap f(A)$ and then we consider $\lb{A}_{[\alpha]}$.
  Since the fiber of $\lb{A}_{[\alpha]}$ is a precompact submanifold of $F$ with boundary, it can only have finitely many components.
  Let $P_\alpha$ denote the fiber of $\Res{(\Rls \lb{A})}_{A_{[\alpha]}}$, so then $P_\alpha$ has finitely many components.
  By \cref{lem:connecting}, there is $K \in \mathcal{K}(\overline{P_A})$ which is connected and contains both $x_0$ and $P_\alpha$, where the closure of $P_A$ is taken in $F$.
  Suppose $\beta \in \Regular(f)$ and $\beta \geq \max_{i \in \mathcal{I}}\max_K h_i$, then $P_\beta \supset K$.
  Let $\bfiota_{\alpha, \beta} \colon \Res{(\Rls \lb{A})}_{A_{[\alpha]}} \hookrightarrow \Res{(\Rls \lb{A})}_{A_{[\beta]}}$ be the embedding, and let $\Rls \bfiota_{\alpha, \beta} = (\iota_{\alpha, \beta}, \kappa_{\alpha, \beta}).$
  Let $B_{A, \alpha}$ and $B_{A, \beta}$ be the bases of $\Res{(\Rls \lb{A})}_{A_{[\alpha]}}$ and $\Res{(\Rls \lb{A})}_{A_{[\beta]}}$, respectively.
  Then $\kappa_{\alpha, \beta} \colon B_{A, \alpha} \to B_{A, \beta}$ is a covering map.
  Note that a component of $P_\beta$ contains of $P_\alpha$, this means the image of $\kappa_{\alpha, \beta}$ is a one-fold covering of $B_A$.
  Let $\bfiota_\beta \colon \Res{(\Rls \lb{A})}_{A_{[\beta]}} \hookrightarrow \Rls \lb{A}$ be the embedding, and let $\Rls \bfiota_\beta = (\iota_\beta, \kappa_\beta)$, then $\kappa_\beta \colon B_{A, \beta} \to B_A$ is a diffeomorphism.
  For the same reason, $\# \kappa_{\beta'} = 1$ for any $\beta' \in \Regular(f)$ no less than $\beta$.
\end{proof}

\emph{Acknowledgments}. We thank Ioan Benjenaru, Bruce Driver, Alessio Figalli, Rafe Mazzeo, Lei Ni, and Alan Weinstein for very helpful discussions. 
The authors are supported by NSF CAREER DMS-1518420. 
The first author received support from Severo Ochoa Program at ICMAT in Spain.

\bigskip
\bigskip

\noindent
\'Alvaro Pelayo and Xiudi Tang
\\
Department of Mathematics\\
University of California, San Diego\\
9500 Gilman Drive $\#$ 0112\\
La Jolla, CA 92093-0112, USA
\\
\\
E-mail: alpelayo@math.ucsd.edu\\
E-mail: xdtang@ucsd.edu

\end{document}